\documentclass[11pt,a4paper]{amsart}

\pagenumbering{arabic}
\addtolength{\textheight}{-4mm}
\topmargin6mm

\title[A calculus for finite parts and residues]{A calculus for finite parts and residues of some divergent complex geometric integrals}

\author{Ludvig Svensson}
\keywords{tbd}
\address{Ludvig Svensson, Department of Mathematical Sciences, University of Gothenburg and 
Chalmers University of Technology, SE-412 96 G\"{o}teborg, Sweden}
\email{ludsven@chalmers.se}

\date{\today}

\usepackage[english]{babel}
\usepackage{amsmath,calligra}
\usepackage{amsthm,amssymb,latexsym}
\usepackage[all,cmtip]{xy}
\usepackage{url,ae}
\usepackage{t1enc}
\usepackage{bm}
\usepackage{mathrsfs}
\usepackage{graphicx}
\usepackage{geometry}
\geometry{head=8mm,foot=8mm,bindingoffset=12mm,vcentering=true,twoside=true,textheight=222mm,textwidth=141mm}

\usepackage{enumitem}
\usepackage{tabularx}
\usepackage{mathtools}
\usepackage[nameinlink]{cleveref}
\usepackage{xcolor}  

\usepackage{microtype}

\crefname{figure}{Figure}{Figures}
\crefname{equation}{}{}
\crefname{table}{Table}{Tables}
\crefname{section}{§}{§§}
\crefname{appendix}{Appendix}{Appendices}

\newtheorem{proposition}{Proposition}[section]
\newtheorem{theorem}[proposition]{Theorem}
\newtheorem{lemma}[proposition]{Lemma}

\newtheorem{conjecture}[proposition]{Conjecture}
\theoremstyle{definition}
\newtheorem{definition}[proposition]{Definition}
\newtheorem{example}[proposition]{Example}

\numberwithin{equation}{section}

\crefname{corollary}{corollary}{corollaries}
\crefname{proposition}{proposition}{propositions}
\crefname{theorem}{theorem}{theorems}
\crefname{lemma}{lemma}{lemmas}
\crefname{definition}{definition}{definition}

\newcommand{\C}{\mathbb{C}}
\newcommand{\Q}{\mathbb{Q}}
\renewcommand{\d}{\mathrm{d}}

\def\newop#1{\expandafter\def\csname #1\endcsname{\mathop{\rm #1}\nolimits}}

\begin{document}
\nocite{*}
\bibliographystyle{plain}

\begin{abstract}
    We consider divergent integrals $\int_X \omega$ of certain forms $\omega$ on a reduced pure-dimensional complex space $X$. The forms $\omega$ are singular along a subvariety defined by the zero set of a holomorphic section $s$ of some holomorphic vector bundle $E$. Equipping $E$ with a smooth Hermitian metric allows us to define a finite part $\mathrm{fp}\,\int_X \omega$ of the divergent integral as the action of a certain current extension of $\omega$. We introduce a current calculus to compute finite parts for a special class of $\omega$. Our main result is a formula that decomposes the finite part of such an $\omega$ into sums of products of explicit currents. Lastly, we show that, in principle, it is possible to reduce the computation of $\mathrm{fp}\,\int_X \omega$ for a general $\omega$ to this class.
\end{abstract}

\maketitle
\thispagestyle{empty}

\section{Introduction}

Let $X$ be a reduced complex analytic space of pure dimension $n$ and let $V \subset X$ be an analytic subvariety. Suppose that $V$ is given by the vanishing locus of some holomorphic section $s \colon X \rightarrow E$ of some holomorphic vector bundle $E \rightarrow X$. Given a (smooth) Hermitian metric $\|\cdot\|$ on $E$, let $\mathcal{A}_{s,\|\cdot\|}(X)$ be the space of smooth differential forms $\omega$ on $X \setminus V$ such that, for each compact subset $K \subseteq X$, there exists some integer $N$ such that $\|s\|^{2N}\omega$ extends to a smooth form across $V \cap K$. In fact, we can let $N\in\mathbb{Q}_+$. Let $\mathcal{A}_s(X)$ be the union of all such $\mathcal{A}_{s,\|\cdot\|}(X)$. We call $\mathcal{A}_s(X)$ the space of \textit{quasi-meromorphic} forms on $X$ with singularities defined by the ideal $(s)$.

Any quasi-meromorphic form $\omega \in \mathcal{A}^{p,q}_{s}(X)$ defines a current on $X\setminus V$ of bidegree $(p,q)$ in a natural way (since $\omega$ is a smooth form on $X\setminus V$). It is often desirable to extend $\omega$ as a current to all of $X$. One way to do this is to consider the family of forms $\lambda \mapsto \|s\|^{2\lambda} \omega$ parameterized by $\lambda \in \C$, given some choice of Hermitian metric $\|\cdot\|$ on $E$. For $\mathfrak{Re}\,\lambda$ sufficiently large, $\|s\|^{2\lambda} \omega$ extends to a locally integrable form across $V$ and thus defines a current on all of $X$. Moreover, for any test form $\xi$, the integral
\begin{equation}
    \label{eq:gamma}
    \langle \|s\|^{2\lambda} \omega,\xi\rangle = \int_X \|s\|^{2\lambda} \omega \wedge \xi,
\end{equation}
which is known as an \textit{Archimedean local zeta function}, see \cite{I1}, is holomorphic as a function of $\lambda$ for $\mathfrak{Re}\,\lambda \gg 0$ and admits a meromorphic continuation to all of $\C$, with poles in a discrete subset of $\Q$. This classical result, originally due to Bernstein--Gel'fand and Atiyah, see \cite{BG} and \cite{Atiyah}, respectively, was used in, e.g., \cite{S1} to study the current $\|s\|^{2\lambda}\omega$ in a neighborhood of $\lambda = 0$. There is some $\kappa \leq n$ such that, in a neighborhood of $\lambda = 0$,
\begin{equation}
    \label{eq:laurentexpansion}
    \langle \|s\|^{2\lambda} \omega, \xi \rangle = \sum\limits_{j=-\kappa}^\infty \lambda^j \langle \mu_{j}^{\|s\|}(\omega), \xi \rangle,
\end{equation}
where $\mu_{j}^{\|s\|}(\omega)$ are currents on $X$ and $\mu_0^{\|s\|}(\omega)$ is a current extension of $\omega$ across $V$.

When $\omega \in \mathcal{A}_s(X)$ is of top degree and either $X$ is compact, or, more generally, $\overline{\mathrm{supp}\,\omega}$ is compact, we can use $\mu_0^{\|s\|}(\omega)$ to define \textit{a finite part} $\mathrm{fp}\int_X \omega$ of the divergent integral $\int_X \omega$ as
\begin{equation}
    \label{eq:finitepartdef}
    \mathrm{fp}\int_X \omega = \langle \mu_0^{\|s\|}(\omega),1\rangle.
\end{equation}
Improper integrals of the type $\int_X \omega$ appear, for instance, in superstring perturbation theory. In that setting $X$ is a moduli space of super Riemann surfaces with marked points, $V$ is the boundary divisor of some suitable compactification of $X$ and $\omega = \alpha \wedge \bar{\beta}$, where $\alpha$ and $\beta$ are meromorphic $(n,0)$-forms with polar set contained in $V$, see, e.g., \cite{W1}. 

\medskip

The main result of this paper is a description of the currents $\mu_j^{\|s\|}(\omega)$ for certain quasi-meromorphic forms $\omega$ which are built up by simpler elements. By simple element we mean the following: Assume that $s$ is a holomorphic section of a Hermitian line bundle $(L,\|\cdot\|)$ and let
\begin{equation}
    \label{eq:omegasimple}
    \omega = \bar{\partial}\log\|s\|^2 \wedge \partial \log\|s\|^2.
\end{equation}
Then $\omega\in\mathcal{A}_{s,\|\cdot\|}^{1,1}(X)$. We call $\omega$ an \textit{elementary} quasi-meromorphic form associated to the pair $(s,\|\cdot\|)$. We obtain an explicit formula for $\mu_{\ell}^{\|s\|}(\omega)$, for each $\ell$, see \Cref{lem:1} below. In particular, it turns out that $\mu_\ell^{\|s\|}(\omega) = 0$ for each $\ell < -1$, and
\begin{equation}
    \label{eq:mu-1}
    \mu_{-1}^{\|s\|}(\omega) = 2\pi i [\mathrm{div}(s)],
\end{equation}
where $[\mathrm{div}(s)]$ is the Lelong current associated to the divisor $\mathrm{div}(s)$.\footnote{If $X$ is not normal, then $\mathrm{div}(s) = p_*\mathrm{div}(p^*s)$, where $p \colon \widetilde{X} \rightarrow X$ is the normalization.}

\medskip

By a quasi-meromorphic form $\omega$ built up by simpler elements, we mean
\begin{equation}
    \label{eq:omegaproduct}
    \omega = \omega_1 \wedge \cdots \wedge \omega_\kappa,
\end{equation}
where $\omega_j$ is an elementary quasi-meromorphic form associated to a holomorphic section $s_j \colon X \rightarrow L_j$ of a holomorphic line bundle $L_j \rightarrow X$ equipped with a smooth Hermitian metric $\|\cdot\|_j$, for $j=1,\hdots,\kappa$. Letting $s = s_1 \otimes \cdots \otimes s_\kappa$, we have that $\omega \in \mathcal{A}^{\kappa,\kappa}_{s,\|\cdot\|}(X)$, where $\|\cdot\|$ is the smooth Hermitian metric on $L = L_1 \otimes \cdots \otimes L_\kappa$ satisfying $\|s\|^2 = \|s_1\|_1^2 \cdots \|s_\kappa\|^2_\kappa$. Under the assumption that $s_{j_1},\hdots,s_{j_k}$ is a locally complete intersection for each $1 \leq j_1 < \cdots < j_k \leq \kappa$, we give a natural meaning to products of the form
\begin{equation}
    \label{eq:product}
    \mu_{\ell_1}^{\|s_1\|_1}(\omega_1) \wedge \cdots \wedge \mu_{\ell_\kappa}^{\|s_\kappa\|_\kappa}(\omega_\kappa),
\end{equation}
where $(\ell_1,\hdots,\ell_\kappa)\in \mathbb{Z}_{\geq-1}^\kappa$, see \Cref{def:product} and \Cref{prop:3} below. These products turn out to be commutative and associative, and in the special case when $\mathrm{div}(s)$ is a normal crossings divisor they are, in fact, locally just sums of tensor products of currents (up to multiplication by smooth forms). Our main result says that the currents $\mu_\ell^{\|s\|}(\omega)$, associated to $\omega = \omega_1 \wedge \cdots \wedge \omega_\kappa$, are sums of products of the form \cref{eq:product}.

\medskip 

To formulate the main result in a precise way, we use the notion of a \textit{current-valued meromorphic function}. A current-valued meromorphic function is a mapping $\lambda \mapsto \mu(\lambda)$ defined on some domain $\Omega \subset \C$, such that for any test form $\xi$, $\langle \mu(\lambda),\xi\rangle$ is a meromorphic function on $\Omega$. For more details, see Section 2 below. In this terminology \cref{eq:laurentexpansion} says that
\begin{equation}
    \label{eq:laurentmap}
    \|s\|^{2\lambda} \omega = \sum\limits_{j=-\kappa}^\infty \lambda^j \mu_j^{\|s\|}(\omega),
\end{equation}
as current-valued meromorphic functions in a neighborhood of $\lambda = 0$.

Now we are ready to formulate the main result of this article.
\begin{theorem}
    \label{thm:1}
    Let $s_j \colon X \rightarrow L_j$ be a holomorphic section of a holomorphic line bundle $L_j \rightarrow X$ for $j=1,\hdots,\kappa$, where $1 \leq \kappa \leq n$, such that $s_{j_1},\hdots,s_{j_\ell}$ is a locally complete intersection for each $1\leq j_1 < \cdots < j_\ell \leq \kappa$. Let $\|\cdot\|_j$ be a smooth Hermitian metric on $L_j$ and let $\|\cdot\|$ be the smooth Hermitian metric on $L_{1} \otimes \cdots \otimes L_{\kappa}$ satisfying $\|s \|^2 = \|s_{1}\|_{1}^2 \cdots \|s_{\kappa}\|_{\kappa}^2$, where $s = s_1 \otimes \cdots \otimes s_{\kappa}$. We have the following equality of current-valued meromorphic functions,
    \begin{equation}
        \label{eq:mainresult}
        \sum\limits_{\ell=-\kappa}^\infty \lambda^\ell \mu_{\ell}^{\|s\|}(\omega_1 \wedge \cdots \wedge \omega_\kappa) = \Bigg( \sum\limits_{\ell_1 = -1}^\infty \lambda^{\ell_1} \mu_{\ell_1}^{\|s_{1}\|_1}(\omega_1) \Bigg) \wedge \cdots \wedge \Bigg( \sum\limits_{\ell_\kappa = -1}^\infty \lambda^{\ell_\kappa} \mu_{\ell_\kappa}^{\|s_{\kappa}\|_\kappa}(\omega_\kappa) \Bigg),
    \end{equation}
    in a neighborhood of $\lambda = 0$, where $\omega_j = \bar{\partial}\log\|s_j\|_j^2 \wedge \partial \log\|s_j\|_j^2$ for $j=1,\hdots,\kappa$.
\end{theorem}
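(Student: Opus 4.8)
The plan is to reduce \Cref{thm:1} to the multivariable regularization underlying \Cref{def:product} and \Cref{prop:3}, followed by a restriction-to-the-diagonal argument. The point of departure is the factorization $\|s\|^{2\lambda} = \|s_1\|_1^{2\lambda}\cdots\|s_\kappa\|_\kappa^{2\lambda}$, which for $\mathfrak{Re}\,\lambda \gg 0$ gives the identity of locally integrable forms
\begin{equation*}
    \|s\|^{2\lambda}\,\omega_1 \wedge \cdots \wedge \omega_\kappa = \big(\|s_1\|_1^{2\lambda}\,\omega_1\big) \wedge \cdots \wedge \big(\|s_\kappa\|_\kappa^{2\lambda}\,\omega_\kappa\big).
\end{equation*}
More generally, introducing independent parameters $\lambda_1,\dots,\lambda_\kappa$, for $\mathfrak{Re}\,\lambda_j \gg 0$ the form
\begin{equation*}
    T(\lambda_1,\dots,\lambda_\kappa) := \big(\|s_1\|_1^{2\lambda_1}\,\omega_1\big) \wedge \cdots \wedge \big(\|s_\kappa\|_\kappa^{2\lambda_\kappa}\,\omega_\kappa\big)
\end{equation*}
is again a well-defined locally integrable form, and its restriction to the diagonal $\lambda_1 = \cdots = \lambda_\kappa = \lambda$ is $\|s\|^{2\lambda}\,\omega_1\wedge\cdots\wedge\omega_\kappa$, i.e.\ the form appearing in \cref{eq:gamma} and \cref{eq:laurentmap} for $\omega = \omega_1 \wedge \cdots \wedge \omega_\kappa$.

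First I would invoke (the construction behind) \Cref{def:product} and \Cref{prop:3}: under the locally complete intersection hypothesis, $T(\lambda_1,\dots,\lambda_\kappa)$ extends to a current-valued meromorphic function on a neighborhood of $0 \in \C^\kappa$ whose only polar hyperplanes there are the coordinate hyperplanes $\{\lambda_j = 0\}$, and whose Laurent expansion is
\begin{equation*}
    T(\lambda_1,\dots,\lambda_\kappa) = \sum_{\ell_1,\dots,\ell_\kappa \geq -1} \lambda_1^{\ell_1}\cdots\lambda_\kappa^{\ell_\kappa}\,\mu_{\ell_1}^{\|s_1\|_1}(\omega_1) \wedge \cdots \wedge \mu_{\ell_\kappa}^{\|s_\kappa\|_\kappa}(\omega_\kappa),
\end{equation*}
the coefficients being precisely the products of \Cref{def:product}. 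I expect this to be the heart of the matter: if one had to prove it directly, one would pass to a common log resolution $p\colon\widetilde X \to X$ on which each $p^*s_j$ is locally a monomial times a non-vanishing factor — chosen, using the l.c.i.\ hypothesis, so that the monomials attached to distinct $s_j$ involve essentially disjoint sets of coordinates — so that, after pulling back and pushing forward, pairing $T$ with a test form becomes a finite sum of products of one-variable Mellin-type integrals $\int |z|^{2 c_j \lambda_j}(\cdots)$ whose joint continuation and Laurent expansion are explicit and whose poles sit only on $\{\lambda_j = 0\}$. The l.c.i.\ condition is exactly what prevents ``mixed'' polar hyperplanes such as $\{\lambda_{j} + \lambda_{j'} = 0\}$ from appearing near the origin — were such hyperplanes present, restriction to the diagonal would raise the pole order and the theorem would fail — so this simultaneous monomialization is the main obstacle, and it is what \Cref{prop:3} provides.

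Granting this, \Cref{thm:1} follows by restricting to the diagonal. Fixing a test form $\xi$ reduces everything to an elementary statement about the meromorphic function $F(\lambda_1,\dots,\lambda_\kappa) = \langle T(\lambda_1,\dots,\lambda_\kappa),\xi\rangle$, which near $0$ has poles only along the coordinate hyperplanes: the restriction $\lambda \mapsto F(\lambda,\dots,\lambda)$ is then meromorphic near $\lambda = 0$, and its Laurent coefficients are obtained by substituting $\lambda_j = \lambda$ in the multivariable Laurent series and collecting equal powers of $\lambda$. Applying this to $F$ and using $T(\lambda,\dots,\lambda) = \|s\|^{2\lambda}\,\omega_1 \wedge \cdots \wedge \omega_\kappa$ gives, near $\lambda = 0$,
\begin{equation*}
    \|s\|^{2\lambda}\,\omega_1 \wedge \cdots \wedge \omega_\kappa = \sum_{\ell \geq -\kappa} \lambda^\ell \sum_{\substack{\ell_1 + \cdots + \ell_\kappa = \ell \\ \ell_1,\dots,\ell_\kappa \geq -1}} \mu_{\ell_1}^{\|s_1\|_1}(\omega_1) \wedge \cdots \wedge \mu_{\ell_\kappa}^{\|s_\kappa\|_\kappa}(\omega_\kappa),
\end{equation*}
the inner sum being finite and the range $\ell \geq -\kappa$ arising because $\mu_{\ell_j}^{\|s_j\|_j}(\omega_j) = 0$ for $\ell_j < -1$ by \Cref{lem:1}. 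Comparing with \cref{eq:laurentmap} identifies the left-hand side with $\sum_\ell \lambda^\ell\,\mu_\ell^{\|s\|}(\omega_1 \wedge \cdots \wedge \omega_\kappa)$, while the double sum on the right is, by definition, the wedge product of the one-variable Laurent series, i.e.\ the right-hand side of \cref{eq:mainresult}. This establishes the theorem; the remaining points — that the diagonal restriction of a current-valued meromorphic function is again one, and the bookkeeping of Laurent coefficients — are routine.
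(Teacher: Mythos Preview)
Your proposal is correct and follows essentially the same route as the paper: introduce the multivariable regularization (the paper's $g(\boldsymbol{\lambda}) = \lambda_1\cdots\lambda_\kappa\langle T,\xi\rangle$), use the holomorphicity established in the proof of \Cref{prop:3}/\Cref{prop:4} together with \cref{eq:mugrel} to identify the multivariable Laurent coefficients with the products of \Cref{def:product}, and then restrict to the diagonal --- the paper does this last step via the chain rule for $\tfrac{\d^{\ell+\kappa}}{\d\lambda^{\ell+\kappa}}(g\circ\mathrm{diag})(0)$, which is exactly your ``substitute $\lambda_j=\lambda$ and collect powers''. One minor remark: your parenthetical sketch of the multivariable continuation via a log resolution with disjoint monomializations is not how the paper handles it --- it instead works locally with $\|s_j\|_j^2 = |f_j|^2 v_j$ and invokes a result of Samuelsson on analytic continuation of residue currents for complete intersections --- but since you rightly defer this step to \Cref{prop:3}, this does not affect your argument.
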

For instance, \Cref{thm:1}, together with \cref{eq:mu-1}, gives the following decomposition formula for the coefficient $\mu_{-\kappa}^{\|s\|}(\omega_1\wedge \cdots \wedge \omega_\kappa)$ of the leading term of the left-hand side of \cref{eq:mainresult},
\begin{equation}
    \label{eq:intersectionlelongcurrent}
    \mu_{-\kappa}^{\|s\|}(\omega_1\wedge \cdots \wedge \omega_\kappa) = (2\pi i)^{\kappa} [\mathrm{div}(s_1)] \wedge \cdots \wedge [\mathrm{div}(s_\kappa)].
\end{equation}
The product of Lelong currents associated to a locally complete intersection of divisors in the right-hand side of \cref{eq:intersectionlelongcurrent} is standard in complex geometry, and can be obtained by any reasonable regularization of the factors if $X$ is a complex manifold. The resulting current is the Lelong current associated to the proper intersection $\mathrm{div}(s_1) \cdots \mathrm{div}(s_\kappa)$.

\medskip

\Cref{thm:1} allows us, at least in principle, to reduce the calculation of the finite part for any $\omega\in\mathcal{A}_s(X)$ to calculating products \cref{eq:product} where $\omega_j$ are elementary quasi-meromorphic forms. To see this, first note that by Hironaka's theorem, we can always find a modification $\pi \colon \widetilde{X} \rightarrow X$ such that $\widetilde{X}$ is smooth and $\pi^* s$ defines a normal crossings divisor with smooth irreducible components. Moreover, it turns out that $\pi^* \omega \in \mathcal{A}^{n,n}_{\pi^* s}(\widetilde{X})$ and $\mu_0^{\|s\|}(\omega) = \pi_* \mu_0^{\|\pi^* s\|}(\pi^*\omega)$. Thus, cf. \cref{eq:finitepartdef}, 
\begin{equation}
    \label{eq:finitepartmodification}
    \mathrm{fp}\int_X \omega = \langle \mu_0^{\|s\|}(\omega),1\rangle = \langle \pi_* \mu_0^{\|\pi^* s\|}(\pi^*\omega),1\rangle = \langle \mu_0^{\|\pi^* s\|}(\pi^*\omega),\pi^* 1\rangle  = \mathrm{fp}\int_{\widetilde{X}} \pi^*\omega.
\end{equation}
Consequently, for the purposes of computing finite parts, we can assume that $X$ is a smooth manifold and $s \colon L \rightarrow X$ defines a normal crossings divisor $D$. The second observation is that $\mathrm{fp}\int_X \omega$ depends only on the de Rham class of the current $\mu_0^{\|s\|}(\omega)$. Therefore, we can look for representatives of $[\mu_0^{\|s\|}(\omega)]_{\mathrm{dR}}$ that facilitate computation of the finite part. 

\medskip

The third observation is that for any $\omega \in \mathcal{A}^{n,n}_s(X)$ there are $\omega_\ell^{\mathrm{tame}} \in \mathcal{A}^{n,n}_s(X)$, for $\ell=0,\hdots,n$, such that
\begin{equation}
    \label{eq:tamecohomology}
    [\mu_0^{\|s\|}(\omega)]_{\mathrm{dR}} = \sum\limits_{\ell = 0}^n (-1)^\ell [\mu_{-\ell}^{\|s\|}(\omega_\ell^{\mathrm{tame}})]_{\mathrm{dR}},
\end{equation}
see \Cref{prop:1} and the paragraph following it. Here $\omega^{\mathrm{tame}}_\ell$ has \textit{tame singularities} along $D$, which means that locally there are holomorphic coordinates $z = (z_1,\hdots,z_n)$ such that
\[
    \omega^{\mathrm{tame}}_\ell = \frac{\widetilde{\omega}}{|z_1\cdots z_\kappa|^2},
\]
for some $0 \leq \kappa \leq n$, where $\widetilde{\omega}$ is a smooth top form.

The fourth observation, see \Cref{cor:1} below, is that any quasi-meromorphic top form $\omega^{\mathrm{tame}}$ with tame singularities along $D$ is of the form
\begin{equation}
    \label{eq:tamedecomposition}
    \omega^{\mathrm{tame}} = \sum_{|J|=0}^{\min\{n,q\}}\widetilde{\omega}_J \wedge \bigwedge_{j\in J} \partial \log\|s_j\|_j^2 \wedge \bar{\partial}\log\|s_j\|_j^2,
\end{equation}
where $\widetilde{\omega}_J$ are smooth forms and $s_j$ are holomorphic sections of Hermitian line bundles $(L,\|\cdot\|_j)$ such that $\mathrm{div}(s_j)$, for $j=1,\hdots,q$, are the irreducible components of $D$.

Finally, \Cref{thm:1} reduces the computation of $\mu_\ell^{\|s\|}(\omega^{\mathrm{tame}})$ to calculating products \cref{eq:product} where $\omega_j$ are elementary quasi-meromorphic forms.

\medskip

In Section 5 we prove \cref{eq:tamecohomology,eq:tamedecomposition} and fully detail an algorithm for computing finite parts based on these formulas. In Section 6 we apply the algorithm to compute finite parts of a certain quasi-meromorphic top form on $\mathbb{P}^n$ with tame singularities along the normal crossings divisor defined by $Z_0 \cdots Z_n \colon \mathbb{P}^n \rightarrow \mathcal{O}(n+1)$, for $n = 2$ and $n=3$. The form in question is given by
\[
    \omega = \frac{1}{n!}\frac{1}{\|Z_0\cdots Z_n\|^2} \omega_{\mathrm{FS}}^n,
\]
where $\omega_{\mathrm{FS}}$ is the Fubini--Study form, and $\|\cdot\|$ is the Fubini--Study metric on $\mathcal{O}(n+1)$ which we also use for the regularization of $\int_{\mathbb{P}^n}\omega$. In coordinates $z = (z_1,\hdots,z_n)$ on the standard affine piece of $\mathbb{P}^n$, we have that
\[
    \omega = \frac{i^n}{2^n} \frac{\d z_1 \wedge \d \bar{z}_1 \wedge \cdots \wedge \d z_n \wedge \d \bar{z}_n}{|z_1 \cdots z_n|^2}.
\]
%
We find that
\[
    \mathrm{fp}\int_{\mathbb{P}^2} \omega = -9\pi^2 \zeta(2)\qquad \text{and}\qquad \mathrm{fp}\int_{\mathbb{P}^3} \omega = 80 \pi^3 \zeta(3).
\]
Moreover, we provide an alternative ad-hoc formula for $\mathrm{fp}\int_{\mathbb{P}^n} \omega$, see \cref{eq:adhocformula} below, which holds for general $n$, and use it to compute the corresponding finite part for $n=4$ and $n=5$. We obtain the following:
\[
    \mathrm{fp}\int_{\mathbb{P}^4} \omega = -150\pi^4 \zeta(4)\qquad \text{and} \qquad \mathrm{fp}\int_{\mathbb{P}^5} \omega = 252\pi^5 \big( 37\zeta(5) - 25\zeta(2)\zeta(3)\big).
\]
We note that the resulting finite parts are all $\Q$-linear combinations of (multiple) zeta-values, multiplied by a factor of $\pi^n$.

Let us conclude this introduction by pointing out some intriguing connections to Deligne's and Beilinson's conjectures concerning the relations between periods and $L$-functions. 
Recall that motivic $L$-functions are a vast number-theoretic generalization of the Riemann zeta function. 
According to conjectures of Deligne and Beilinson, all central values of motivic $L$-functions can be expressed as periods of algebraic forms, defined over a number field, see, e.g., \cite{Z1} and \cite[Section 3]{KZ1}. It seems therefore natural to ask wether principal values or finite parts of divergent periods, in special situations, may also be expressed in terms of special values of $L$-functions, e.g., (multiple) zeta values, when using a canonical or algebraic metric in the regularization procedure. In this direction, we provide the aforementioned examples, which also serve to demonstrate the calculus developed in this paper.

\section{Preliminaries}

\subsection{Current-valued meromorphic functions}

Let $X$ be a reduced analytic space of pure dimension $n$. Recall that a current-valued holomorphic function is a mapping $\mu \colon \Omega \rightarrow \mathscr{D}'(X)$, defined on some domain $\Omega\subseteq \C$, such that, for any $\xi \in \mathscr{D}(X)$, $\langle \mu(\lambda),\xi \rangle$ is a holomorphic function in $\Omega$. Here $\mathscr{D}(X)$ denotes the space of test forms on $X$ and $\mathscr{D}'(X)$ the space of currents. The derivative $\d \mu / \d \lambda$ of a current-valued holomorphic function is the current-valued holomorphic function defined by
\[
    \langle \tfrac{\d}{\d \lambda}\mu(\lambda), \xi \rangle = \frac{\d}{\d \lambda} \langle \mu(\lambda), \xi \rangle,
\]
for any test form $\xi \in \mathscr{D}(X)$. The fact that $\frac{\d}{\d \lambda} \mu(\lambda)$ is a current for each $\lambda$ follows by standard arguments. The ability to take derivatives of current-valued holomorphic functions implies, in particular, that in a neighborhood of any point $\lambda_0 \in \Omega$, we can identify a given current-valued holomorphic function with its Taylor series,
\[
    \mu(\lambda) = \sum\limits_{j=0}^\infty \frac{(\lambda-\lambda_0)^j}{j!} \frac{\d^j \mu}{\d \lambda^j}(\lambda_0).
\]
A \textit{current-valued meromorphic function} is a mapping $\lambda \mapsto \mu(\lambda)$ defined on some domain $\Omega\subseteq\C$ that is a current-valued holomorphic function on the complement of a countable number of isolated points in $\Omega$, and such that for any $\xi \in \mathscr{D}(X)$, $\langle\mu(\lambda),\xi\rangle$ is a meromorphic function on $\Omega$. It follows by standard arguments that in a neighborhood of any point $\lambda_0 \in \Omega$, we can identify a current-valued meromorphic function $\mu(\lambda)$ with its \textit{Laurent series},
\[
    \mu(\lambda) = \sum\limits_{j=-\kappa}^\infty (\lambda-\lambda_0)^j \mu_j.
\]
Here $\mu_j$ is the current such that $\langle \mu_j,\xi \rangle$ is the $j$\textsuperscript{th} coefficient of the Laurent series of $\langle\mu(\lambda),\xi\rangle$ about $\lambda_0$, i.e., 
\begin{equation}
    \label{eq:residue}
    \langle \mu_j, \xi \rangle = \underset{\lambda = \lambda_0}{\mathrm{Res}}\Big\{\lambda^{-(j + 1)}\langle \mu(\lambda),\xi \rangle \Big\} = \frac{1}{(j+\kappa)!}\Big\langle \frac{\d^{j+\kappa}}{\d \lambda^{j+\kappa}}\big(\lambda^\kappa \mu(\lambda)\big),\xi\Big\rangle\Big|_{\lambda = \lambda_0},
\end{equation}
for $j \geq -\kappa$.

\subsection{Properties of $\mu_j^{\| s \|}(\omega)$}

Here we collect some properties of the currents $\mu_j^{\|s\|}(\omega)$ associated to a quasi-meromorphic form $\omega \in \mathcal{A}_{s,\|\cdot\|}(X)$ on a reduced analytic space $X$ of pure dimension $n$. As in the introduction, here $s \colon X \rightarrow E$ is a holomorphic section of a holomorphic vector bundle equipped with a Hermitian metric $\|\cdot\|$. See \cite{S1} for proofs.

For any $j$, $\mu_j^{\|\cdot\|}(\omega)$ is linear on smooth forms, that is, if $\alpha$ and $\beta$ are smooth forms then we have that
\begin{equation}
    \label{eq:linearity}
    \mu_j^{\|s\|}(\omega\wedge(\alpha + \beta)) = \mu_j^{\|s\|}(\omega) \wedge \alpha +  \mu_j^{\|s\|}(\omega) \wedge \beta.
\end{equation}
Furthermore, for $\omega_1,\omega_2 \in \mathcal{A}_{s,\|\cdot\|}(X)$, we have that
\begin{equation}
    \label{eq:additivity}
    \mu_j^{\|s\|}(\omega_1 + \omega_2) = \mu_j^{\|s\|}(\omega_1) + \mu_j^{\|s\|}(\omega_2).
\end{equation}
If $\eta \in \mathcal{A}_{s,\|\cdot\|}(X)$, then $\d \eta, \tfrac{\d \|s\|^2}{\|s\|^2} \wedge \eta \in \mathcal{A}_{s,\|\cdot\|}(X)$. Moreover, for each $j$ we have that
\begin{equation}
    \label{eq:thm4.1ii}
    \d \mu_j^{\|s\|}(\eta) = \mu_j^{\|s\|}(\d \eta) + \mu_{j-1}^{\|s\|} \bigg( \frac{\d \|s\|^2}{\|s\|^2} \wedge \eta\bigg).
\end{equation}

\section{Elementary quasi-meromorphic forms}

Let $X$ be a reduced analytic space of pure dimension $n$, and suppose that $s \colon X \rightarrow L$ is a holomorphic section of a Hermitian line bundle. As mentioned above, to a quasi-meromorphic form $\omega \in \mathcal{A}_{s,\|\cdot\|}^{p,q}(X)$ we associate the current-valued meromorphic function $\lambda \mapsto \|s\|^{2\lambda} \omega$ on $\C$, given by
\[
    \langle \|s\|^{2\lambda} \omega, \xi \rangle = \int_{X} \|s\|^{2\lambda} \omega \wedge \xi,\quad \xi \in \mathscr{D}(X).
\]
In a neighborhood of $\lambda = 0$, we identify $\|s\|^{2\lambda} \omega$ with its Laurent series, see \cref{eq:laurentmap}. Recall that $\mu_j^{\|s\|}(\omega)$ is the $j$\textsuperscript{th} coefficient of the Laurent series expansion of $\|s\|^{2\lambda}\omega$ about $\lambda = 0$. We have the following lemma describing $\mu_j^{\|s\|}(\omega)$ when $\omega$ is an elementary quasi-meromorphic form.
\begin{lemma}
    \label{lem:1}
    Let $s \colon X \rightarrow L$ be a holomorphic section of a holomorphic line bundle $L$ equipped with a smooth Hermitan metric $\|\cdot\|$, and let $\omega = \bar{\partial}\log\|s\|^2 \wedge \partial \log\|s\|^2$. For each $\ell \geq -1$, the current-valued map
    \begin{equation}
        \label{eq:Ljl}
        L_{\ell}(\lambda) = \frac{1}{(\ell+1)!} \bar{\partial}\Big( \big( \log\|s\|^2 \big)^{\ell+1} \|s\|^{2\lambda} \Big) \wedge \partial \log \|s\|^2,
    \end{equation}
    a priori defined and holomorphic for $\mathfrak{Re}\,\lambda \gg 0$, has an analytic continuation to a neighborhood of $\mathfrak{Re}\,\lambda\geq 0$. Moreover, 
    \begin{equation}
        \label{eq:elementarymuformula}
        \mu_\ell^{\|s\|}(\omega) = L_{\ell}(0) = \frac{1}{(\ell+2)!} \bar{\partial}\partial \big( \log\|s\|^2\big)^{\ell+2} + \frac{2\pi i}{(\ell+1)!}\big(\log\|s\|^2\big)^{\ell+1} c_{1}(L,\|\cdot\|),
    \end{equation}
    where $c_1(L,\|\cdot\|)$ is the Chern curvature of $(L,\|\cdot\|)$.
\end{lemma}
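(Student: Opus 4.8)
The plan is to transfer the whole computation to the behaviour at $\lambda=0$ of the elementary local zeta function $\lambda\mapsto\|s\|^{2\lambda}$, which is classically holomorphic there (no pole), with $\|s\|^{2\lambda}\big|_{\lambda=0}=1$ and $\tfrac{\d^k}{\d\lambda^k}\|s\|^{2\lambda}\big|_{\lambda=0}=(\log\|s\|^2)^k$, each $(\log\|s\|^2)^k$ being a locally integrable function on $X$. Write $V=\{s=0\}$ and let $\beta:=\bar{\partial}\partial\log\|s\|^2-2\pi i\,[\mathrm{div}(s)]$; by the Lelong--Poincar\'e formula $\beta$ is a smooth $(1,1)$-form on $X$, namely $\beta=-2\pi i\,c_1(L,\|\cdot\|)$, and it equals $\bar{\partial}\partial\log\|s\|^2$ away from $V$. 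On $X\setminus V$ one has $\bar{\partial}\|s\|^{2\lambda}=\lambda\|s\|^{2\lambda}\bar{\partial}\log\|s\|^2$ and $\|s\|^{2\lambda}\partial\log\|s\|^2=\tfrac1\lambda\partial\|s\|^{2\lambda}$, so the Leibniz rule yields, for $\mathfrak{Re}\,\lambda\gg0$,
\begin{equation*}
    \lambda\,\|s\|^{2\lambda}\omega \;=\; \bar{\partial}\big(\|s\|^{2\lambda}\,\partial\log\|s\|^2\big)-\|s\|^{2\lambda}\beta \;=\; \frac1\lambda\,\bar{\partial}\partial\|s\|^{2\lambda}-\|s\|^{2\lambda}\beta .
\end{equation*}
For $\mathfrak{Re}\,\lambda\gg0$ every form occurring here is locally integrable on $X$ with locally integrable differential, and a standard Stokes argument (the boundary integrals over $\{\|s\|=\varepsilon\}$ vanish as $\varepsilon\to0$) shows that this is an identity of currents on all of $X$; alternatively one pulls back by a Hironaka modification making $s$ locally monomial and uses the compatibility of the $\mu_j^{\|s\|}$ with proper push-forward, cf.\ the discussion around \cref{eq:finitepartmodification}.

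From this identity I would first deduce that $\|s\|^{2\lambda}\omega$ has at most a simple pole at $\lambda=0$. Indeed, since $\|s\|^{2\lambda}$ is holomorphic near $\lambda=0$ with value $1$, the current $\bar{\partial}\partial\|s\|^{2\lambda}$ is holomorphic near $\lambda=0$ and vanishes there, so $\tfrac1\lambda\bar{\partial}\partial\|s\|^{2\lambda}$ and $\|s\|^{2\lambda}\beta$ are holomorphic near $\lambda=0$; hence so is $\lambda\,\|s\|^{2\lambda}\omega$, and in particular $\mu_j^{\|s\|}(\omega)=0$ for $j<-1$. Dividing the displayed identity by $\lambda$, substituting $\bar{\partial}\partial\|s\|^{2\lambda}=\sum_{k\geq1}\tfrac{\lambda^k}{k!}\bar{\partial}\partial(\log\|s\|^2)^k$ and $\|s\|^{2\lambda}\beta=\sum_{k\geq0}\tfrac{\lambda^k}{k!}(\log\|s\|^2)^k\beta$, and comparing coefficients of $\lambda^\ell$ gives, for every $\ell\geq-1$,
\begin{equation*}
    \mu_\ell^{\|s\|}(\omega) \;=\; \frac1{(\ell+2)!}\bar{\partial}\partial(\log\|s\|^2)^{\ell+2}-\frac1{(\ell+1)!}(\log\|s\|^2)^{\ell+1}\beta ,
\end{equation*}
which is the claimed formula after inserting $\beta=-2\pi i\,c_1(L,\|\cdot\|)$, and which recovers \cref{eq:mu-1} for $\ell=-1$.

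It remains to identify this current with $L_\ell(0)$ and to justify the analytic continuation of $L_\ell$. Expanding $\bar{\partial}\big((\log\|s\|^2)^{\ell+1}\|s\|^{2\lambda}\big)$ by the Leibniz rule and using $(\log\|s\|^2)^k\|s\|^{2\lambda}\omega=\tfrac{\d^k}{\d\lambda^k}(\|s\|^{2\lambda}\omega)$ for $\mathfrak{Re}\,\lambda\gg0$ together with the identity $\tfrac{\d^{\ell+1}}{\d\lambda^{\ell+1}}(\lambda g)=\lambda g^{(\ell+1)}+(\ell+1)g^{(\ell)}$ applied to $g(\lambda)=\|s\|^{2\lambda}\omega$, one obtains
\begin{equation*}
    L_\ell(\lambda) \;=\; \frac1{(\ell+1)!}\,\frac{\d^{\ell+1}}{\d\lambda^{\ell+1}}\big(\lambda\,\|s\|^{2\lambda}\omega\big),\qquad \mathfrak{Re}\,\lambda\gg0 .
\end{equation*}
Since $\|s\|^{2\lambda}\omega$ is locally integrable, hence holomorphic, for $\mathfrak{Re}\,\lambda>0$, and $\lambda\,\|s\|^{2\lambda}\omega$ is holomorphic near $\lambda=0$ by the previous step, the right-hand side is holomorphic on a neighbourhood of $\{\mathfrak{Re}\,\lambda\geq0\}$; this furnishes the asserted continuation of $L_\ell$, and evaluating at $\lambda=0$ and comparing with \cref{eq:residue} (with $\kappa=1$) gives $L_\ell(0)=\mu_\ell^{\|s\|}(\omega)$.

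I expect the main obstacle to be the passage in the first step from the formal computation on $X\setminus V$ to a genuine current identity on all of $X$ --- i.e.\ ruling out a spurious term supported on $V$ --- together with the standard but not wholly trivial facts that $\|s\|^{2\lambda}$, $(\log\|s\|^2)^k\|s\|^{2\lambda}$ and $\|s\|^{2\lambda}\partial\log\|s\|^2$ are holomorphic current-valued functions near $\lambda=0$ with the indicated values. Both are handled either directly, via Stokes' theorem for $\mathfrak{Re}\,\lambda\gg0$ and the classical theory of $\|s\|^{2\lambda}$, or, more robustly, by first pulling everything back to a log-resolution of $(X,V)$.
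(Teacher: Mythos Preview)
Your argument is correct and follows essentially the same route as the paper's proof: both hinge on the identity $\lambda\|s\|^{2\lambda}\omega=\bar\partial\big(\|s\|^{2\lambda}\partial\log\|s\|^2\big)-\|s\|^{2\lambda}\beta$ (with $\beta=-2\pi i\,c_1(L,\|\cdot\|)$), the recognition that $L_\ell(\lambda)=\tfrac{1}{(\ell+1)!}\tfrac{\d^{\ell+1}}{\d\lambda^{\ell+1}}\big(\lambda\|s\|^{2\lambda}\omega\big)$, and the holomorphy of $\|s\|^{2\lambda}$ and $\|s\|^{2\lambda}\partial\log\|s\|^2$ near $\lambda=0$. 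The only cosmetic differences are that you additionally rewrite $\bar\partial\big(\|s\|^{2\lambda}\partial\log\|s\|^2\big)$ as $\tfrac{1}{\lambda}\bar\partial\partial\|s\|^{2\lambda}$ and read off the Laurent coefficients directly from the Taylor expansion of $\|s\|^{2\lambda}$, and that the paper disposes of the ``passage across $V$'' you flag simply by noting $\|s\|^{2\lambda}[\mathrm{div}(s)]=0$ for $\mathfrak{Re}\,\lambda>0$, rather than invoking Stokes or a resolution.
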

\begin{proof}
    For $\mathfrak{Re}\,\lambda \gg 0$, $L_\ell(\lambda)$ is clearly holomorphic and we have that
    \begin{equation}
        \label{eq:Lcomp}
        \begin{aligned}
            L_{\ell}(\lambda) &= \frac{1}{(\ell+1)!}\bar{\partial}\Big( \big( \log\|s\|^2 \big)^{\ell+1} \|s\|^{2\lambda} \Big) \wedge \partial \log \|s\|^2\\ 
            &= \frac{1}{(\ell+1)!}\frac{\d^{\ell+1}}{\d \lambda^{\ell+1}}\bigg( \bar{\partial}\|s\|^{2\lambda} \wedge \partial \log \|s\|^2 \bigg)\\
            &= \frac{1}{(\ell+1)!}\frac{\d^{\ell+1}}{\d \lambda^{\ell+1}}\bigg( \bar{\partial} \Big(\|s\|^{2\lambda} \partial \log \|s\|^2 \Big) - \|s\|^{2\lambda}\bar{\partial}\partial \log \|s\|^2 \bigg) \\
            &= \frac{1}{(\ell+1)!}\frac{\d^{\ell+1}}{\d \lambda^{\ell+1}}\bigg( \bar{\partial} \Big(\|s\|^{2\lambda} \partial \log \|s\|^2 \Big) + 2\pi i \|s\|^{2\lambda}c_1(L,\|\cdot\|) \bigg),
        \end{aligned}
    \end{equation}
    where the last step follows by the Poincaré--Lelong formula,
    \begin{equation}
        \label{eq:poincarelelong}
        \bar{\partial}\partial \log\|s\|^2 = 2\pi i [\mathrm{div}(s)] - 2 \pi i c_{1}(L,\|\cdot\|),
    \end{equation}
    and the fact that $\|s\|^{2\lambda}[\mathrm{div}(s)]=0$ for $\mathfrak{Re}\,\lambda>0$. It is clear that $\|s\|^{2\lambda}c_1(L,\|\cdot\|)$ is holomorphic in a neighborhood of $\mathfrak{Re}\,\lambda \geq 0$. The same is true for $\|s\|^{2\lambda} \partial\log\|s\|^{2\lambda}$ since $\partial\log\|s\|^2$ is locally integrable. It follows that $L_\ell(\lambda)$ extends, as a current-valued holomorphic function, to a neighborhood of $\mathfrak{Re}\,\lambda \geq 0$. Note also that
    \begin{align*}
        L_{\ell}(\lambda) &= \frac{1}{(\ell+1)!}\frac{\d^{\ell+1}}{\d \lambda^{\ell+1}}\bigg( \bar{\partial}\|s\|^{2\lambda} \wedge \partial \log \|s\|^2 \bigg) \\
        &= \frac{1}{(\ell+1)!}\frac{\d^{\ell+1}}{\d \lambda^{\ell+1}}\bigg( \lambda \|s\|^{2\lambda}\bar{\partial}\log\|s\|^2 \wedge \partial \log \|s\|^2 \bigg) \\
        &= \frac{1}{(\ell+1)!}\frac{\d^{\ell+1}}{\d \lambda^{\ell+1}}\bigg( \lambda \|s\|^{2\lambda}\omega \bigg).
    \end{align*}
    Thus, $\|s\|^{2\lambda}\omega$ extends to a current-valued meromorphic function in a neighborhood of $\mathfrak{Re}\,\lambda \geq 0$ with a pole at $\lambda = 0$ of order $\leq 1$. By setting $\lambda = 0$, it follows in veiw of \cref{eq:residue} that
    \begin{equation}
        \label{eq:Ll0}
        \begin{aligned}
            L_{\ell}(0) &= \frac{1}{(\ell+1)!}\frac{\d^{\ell+1}}{\d \lambda^{\ell+1}} \big(\lambda \|s\|^{2\lambda}\omega \big)\Big|_{\lambda = 0} \\
            &= \mu_{\ell}^{\|s\|}(\omega).
        \end{aligned}
    \end{equation}
    It remains to show the second equality in \cref{eq:elementarymuformula}. A first observation is that $(\log\|s\|^2)^{\ell}$ and $(\log\|s\|^2)^\ell \partial \log\|s\|^2$ are $L^1_{\mathrm{loc}}$ for any $\ell \in \mathbb{N}$. Thus, $\|s\|^{2\lambda}(\log\|s\|^2)^\ell \partial \log\|s\|^2$ and $\|s\|^{2\lambda}(\log\|s\|^2)^\ell c_1(L,\|\cdot\|)$ are holomorphic in a neighborhood of $\mathfrak{Re}\,\lambda \geq 0$, with 
    \[
        \|s\|^{2\lambda}\big(\log\|s\|^2\big)^\ell \partial \log\|s\|^2\Big|_{\lambda = 0} = \big(\log\|s\|^2\big)^\ell \partial \log\|s\|^2
    \]
    and
    \[
        \|s\|^{2\lambda}\big(\log\|s\|^2\big)^\ell c_{1}(L,\|\cdot\|)\Big|_{\lambda = 0} = \big(\log\|s\|^2\big)^\ell c_{1}(L,\|\cdot\|).
    \]
    By the last equality in \cref{eq:Lcomp}, we therefore have that
    \begin{align*}
        L_\ell(0) &= \frac{1}{(\ell+1)!}\frac{\d^{\ell+1}}{\d \lambda^{\ell+1}}\bigg( \bar{\partial} \Big(\|s\|^{2\lambda} \partial \log \|s\|^2 \Big) + 2\pi i \|s\|^{2\lambda}c_1(L,\|\cdot\|) \bigg)\bigg|_{\lambda = 0} \\
        &= \frac{1}{(\ell+1)!}\bigg( \bar{\partial} \Big(\big( \log\|s\|^2\big)^{\ell+1} \|s\|^{2\lambda} \partial \log \|s\|^2 \Big) \\
        &\qquad\qquad\qquad\qquad\qquad\ \ + 2\pi i \big( \log\|s\|^2\big)^{\ell+1} \|s\|^{2\lambda}c_1(L,\|\cdot\|) \bigg)\bigg|_{\lambda=0} \\
        &= \frac{1}{(\ell+1)!}\bigg( \bar{\partial} \Big(\big( \log\|s\|^2\big)^{\ell+1}\partial \log \|s\|^2 \Big) + 2\pi i \big( \log\|s\|^2\big)^{\ell+1} c_1(L,\|\cdot\|) \bigg) \\
        &= \frac{1}{(\ell+2)!} \bar{\partial} \partial \big( \log\|s\|^2\big)^{\ell+2} + \frac{2\pi i}{(\ell+1)!} \big( \log\|s\|^2 \big)^{\ell+1} c_1(L,\|\cdot\|).
    \end{align*}
    Thus, the last equality in \cref{eq:elementarymuformula} follows and the proof is complete.
\end{proof}

\section{Proof of \Cref{thm:1}}

In this section we define the product of currents \cref{eq:product} associated to locally complete intersections, and prove that it is commutative and associative. This is followed by a proof of our main result, \Cref{thm:1}.

Let $X$ be a reduced analytic space of pure dimension $n$. Let $s_j \colon X \rightarrow L_j$ be a holomorphic section of a Hermitian line bundle $(L_j, \|\cdot\|_j)$ for $j=1,\hdots,\kappa$, where $1 \leq \kappa \leq n$, such that $s_{j_1},\hdots,s_{j_\ell}$ is a locally complete intersection for each $1\leq j_1 < \cdots < j_\ell \leq \kappa$. Let $\omega_j = \bar{\partial}\log\|s_j\|_j^2 \wedge \partial \log\|s_j\|_j^2$.
\begin{definition}
    \label{def:product}
    For each $1\leq j_1 < \cdots < j_k \leq \kappa$ and $(\ell_1,\hdots,\ell_k) \in \mathbb{Z}_{\geq-1}^k$, we define the product $\bigwedge_{i=1}^k \mu_{\ell_i}^{\|s_{j_i}\|_{j_i}}(\omega_{j_i}) = \mu_{\ell_1}^{\|s_{j_1}\|_{j_1}}(\omega_{j_1}) \wedge \cdots \wedge \mu_{\ell_k}^{\|s_{j_k}\|_{j_k}}(\omega_{j_k})$ recursively by
    \begin{equation}
        \label{eq:recursivedef}
        \bigwedge_{i=1}^k \mu_{\ell_i}^{\|s_{j_i}\|_{j_i}}(\omega_{j_i}) := L^{j_1}_{\ell_1}(\lambda) \wedge \bigwedge_{i=2}^k \mu_{\ell_i}^{\|s_{j_i}\|_{j_i}}(\omega_{j_i})\bigg|_{\lambda = 0},
    \end{equation}
    where $L^{j_1}_{\ell_1}$ is given by \cref{eq:Ljl}, with $s$ replaced by $s_{j_1}$.
\end{definition}
This definition makes sense due to the following lemma.
\begin{lemma}
    \label{prop:3}
    The map $\lambda \mapsto L^{j_1}_{\ell_1}(\lambda) \wedge \bigwedge_{i=2}^k \mu_{\ell_i}^{\|s_{j_i}\|_{j_i}}(\omega_{j_i})$ appearing on the right-hand side of \cref{eq:recursivedef} is holomorphic in a neighborhood of $\lambda = 0$.
\end{lemma}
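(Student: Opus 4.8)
The plan is to argue by induction on $k$, the key point being that at each stage we are wedging a \emph{fixed} current (independent of $\lambda$) against the current-valued holomorphic family $L^{j_1}_{\ell_1}(\lambda)$, and that this wedge product is well-defined and holomorphic because the two factors have singularities along subvarieties that form a complete intersection. For $k=1$ the statement is exactly the content of \Cref{lem:1}: $L^{j_1}_{\ell_1}(\lambda)$ extends to a current-valued holomorphic function in a neighborhood of $\mathfrak{Re}\,\lambda \geq 0$, in particular near $\lambda = 0$. So assume $k \geq 2$ and that $\nu := \bigwedge_{i=2}^k \mu_{\ell_i}^{\|s_{j_i}\|_{j_i}}(\omega_{j_i})$ is a well-defined current (this is where the inductive hypothesis, applied to the $(k-1)$-tuple $(j_2,\hdots,j_k)$, enters, together with \Cref{def:product} which produces $\nu$ by setting $\lambda = 0$ in the holomorphic family). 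From the first lines of the proof of \Cref{lem:1}, $L^{j_1}_{\ell_1}(\lambda)$ can be written, near $\lambda = 0$, as
\[
    L^{j_1}_{\ell_1}(\lambda) = \frac{1}{(\ell_1+1)!}\frac{\d^{\ell_1+1}}{\d\lambda^{\ell_1+1}}\Big( \bar\partial\big(\|s_{j_1}\|_{j_1}^{2\lambda}\,\partial\log\|s_{j_1}\|_{j_1}^2\big) + 2\pi i\,\|s_{j_1}\|_{j_1}^{2\lambda} c_1(L_{j_1},\|\cdot\|_{j_1})\Big),
\]
so modulo smooth forms it is a $\lambda$-derivative of $\bar\partial$ applied to $\|s_{j_1}\|_{j_1}^{2\lambda}$ times an $L^1_{\mathrm{loc}}$ form. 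Hence it suffices to make sense of, and establish holomorphy of, $\lambda \mapsto \|s_{j_1}\|_{j_1}^{2\lambda}\,\alpha \wedge \nu$ and $\lambda \mapsto \bar\partial\big(\|s_{j_1}\|_{j_1}^{2\lambda}\alpha\big)\wedge\nu$ for $\alpha$ a fixed $L^1_{\mathrm{loc}}$ form smooth off $\mathrm{div}(s_{j_1})$.

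To do this I would work locally and use a resolution. The current $\nu$ is supported on (and built from data along) $\bigcap_{i\geq 2}\mathrm{div}(s_{j_i})$; by the complete intersection hypothesis, $\mathrm{div}(s_{j_1})$ meets this intersection properly. Pulling back by a log resolution $\pi\colon \widetilde X \to X$ that simultaneously makes $\pi^* s_{j_1}$ and all the $\pi^* s_{j_i}$ into normal crossings divisors with smooth components, the pushforward formula (the analogue of \cref{eq:finitepartmodification}, valid because $\pi^*\omega_{j_i} \in \mathcal{A}_{\pi^* s_{j_i}}(\widetilde X)$ and $\mu$ commutes with $\pi_*$ up to the order in question) reduces everything to a model computation in which, in suitable coordinates, $\|s_{j_1}\|_{j_1}^2 = |z_1|^2 e^{\phi}$ with $\phi$ smooth, $\nu$ is (up to smooth factors and $\partial,\bar\partial$ of powers of $\log|z_i|^2$) a tensor product of one-variable currents in disjoint coordinate directions $z_2,\hdots,z_m$, and $\alpha$ involves only $z_1,\bar z_1$. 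Then $\|z_1\|^{2\lambda}\alpha\wedge\nu$ is literally a tensor product of a one-variable $\lambda$-family — whose holomorphy near $\lambda=0$ is the one-variable statement proved in \Cref{lem:1} — with a fixed current in the remaining variables, and tensor products of currents depend holomorphically on holomorphic parameters in each factor. The $\bar\partial$ term is handled the same way after noting $\bar\partial$ commutes with the tensor decomposition and with pullback.

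The main obstacle I anticipate is not the holomorphy itself but \emph{justifying that the wedge product is defined at all}, i.e. that $L^{j_1}_{\ell_1}(\lambda)\wedge\nu$ makes sense as a current for $\lambda$ near $0$ and not merely for $\mathfrak{Re}\,\lambda \gg 0$ where $\|s_{j_1}\|_{j_1}^{2\lambda}$ kills the singularity. The clean way around this is to never multiply two genuinely singular currents directly: keep $\|s_{j_1}\|_{j_1}^{2\lambda}$ as a regularizing factor and observe that the complete intersection hypothesis is exactly what guarantees, after the resolution, that the wave-front sets (equivalently, the coordinate directions of singular support) of the two factors are transverse, so the tensor-product interpretation is unambiguous and independent of the chosen resolution. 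One should also check that setting $\lambda=0$ in $L^{j_1}_{\ell_1}(\lambda)\wedge\nu$ is consistent with \cref{eq:recursivedef} regardless of which index was singled out first — but that is the commutativity/associativity statement, which the paper proves separately, so here it is enough to fix the ordering $j_1 < \cdots < j_k$ and verify well-definedness and holomorphy for that fixed order.
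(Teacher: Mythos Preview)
Your inductive outline has a genuine gap at the resolution step. You want to pull back by a log resolution $\pi\colon\widetilde X\to X$ and then claim that, in local coordinates on $\widetilde X$, the factor coming from $s_{j_1}$ lives in a variable $z_1$ while the current $\nu=\bigwedge_{i\geq 2}\mu_{\ell_i}^{\|s_{j_i}\|_{j_i}}(\omega_{j_i})$ lives in \emph{disjoint} variables $z_2,\hdots,z_m$, so that the wedge is a tensor product. But a log resolution of a locally complete intersection does \emph{not} in general separate the divisors into disjoint coordinate directions: the total transforms $\pi^*s_{j_i}$ typically all contain the exceptional components, so in local coordinates one has $\|\pi^*s_{j_i}\|^2=|z^{a_i}|^2v_i$ with the multi-indices $a_i$ overlapping. (Already $s_1=x$, $s_2=x+y^2$ on $\C^2$ exhibits this after one blow-up.) Consequently the factors $\mu_{\ell_i}^{\|\pi^*s_{j_i}\|}(\pi^*\omega_{j_i})$ are singular in the \emph{same} coordinate directions, the tensor-product interpretation is unavailable, and the transversality/wave-front argument you invoke does not apply. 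A second, related issue is that your inductive hypothesis only furnishes $\nu$ as an abstract current on $X$; you have no pull-back of $\nu$ to $\widetilde X$, and producing one by redoing the construction upstairs runs straight into the overlapping-support problem just described.

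The paper circumvents this entirely by a different, non-inductive route: it introduces \emph{all} the regularization parameters at once, setting
\[
    g(\boldsymbol{\lambda})=\lambda_1\cdots\lambda_\kappa\int_X\|s_1\|_1^{2\lambda_1}\cdots\|s_\kappa\|_\kappa^{2\lambda_\kappa}\,\omega_1\wedge\cdots\wedge\omega_\kappa\wedge\xi,
\]
rewrites this locally as a sum of integrals of the shape $\int \bar\partial|f_{I_1}|^{2\lambda_{I_1}}\wedge\cdots\wedge\bar\partial|f_{I_p}|^{2\lambda_{I_p}}|f_{J_1}|^{2\lambda_{J_1}}\cdots|f_{J_q}|^{2\lambda_{J_q}}/(f_1\cdots f_k)\wedge\varphi$, and then invokes the analytic-continuation theorem of \cite{SK1} for exactly such expressions under a complete-intersection hypothesis. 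That theorem is the substantive input doing the work your tensor-product reduction was meant to do; its own proof uses resolution of singularities, but tracks carefully the mixing of monomial exponents that you assumed away. Once $g$ is known to be holomorphic near the closed positive orthant, both the well-definedness of the product (Lemma~\ref{prop:3}) and its commutativity/associativity fall out together, since $\partial^{\ell_1+1}_{\lambda_1}\cdots\partial^{\ell_\kappa+1}_{\lambda_\kappa}g(0)$ computes the product regardless of the order in which the $\lambda_j$ are sent to $0$.
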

We also have the following.
\begin{proposition}
    \label{prop:4}
    The product $\bigwedge_{i=1}^k \mu_{\ell_i}^{\|s_{j_i}\|_{j_i}}(\omega_{j_i})$ is commutative and associative.
\end{proposition}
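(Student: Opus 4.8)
The plan is to reduce both statements to the single key fact, established in \Cref{prop:3}, that the relevant current-valued maps are holomorphic at $\lambda = 0$, so that "setting $\lambda = 0$" can be performed in any order. First I would unwind the recursive \Cref{def:product} into a more symmetric description. Introduce several complex parameters $\lambda_1,\dots,\lambda_k$, one for each section, and consider the current-valued function
\begin{equation*}
    F(\lambda_1,\dots,\lambda_k) = L^{j_1}_{\ell_1}(\lambda_1) \wedge L^{j_2}_{\ell_2}(\lambda_2) \wedge \cdots \wedge L^{j_k}_{\ell_k}(\lambda_k),
\end{equation*}
which for $\mathfrak{Re}\,\lambda_i \gg 0$ is an honest product of smooth forms and hence manifestly symmetric in the pairs $(\lambda_i,\ell_i,j_i)$ and associative in the obvious sense. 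The main work is to show that $F$ extends holomorphically (in all variables jointly) to a neighborhood of the origin in $\C^k$, and that iterating the one-variable limit $\lambda_{j_1}\to 0$, then $\lambda_{j_2}\to 0$, etc., in the order prescribed by \Cref{def:product}, computes the value $F(0,\dots,0)$ of this joint holomorphic extension. Granting this, the recursive definition of $\bigwedge_{i=1}^k \mu_{\ell_i}^{\|s_{j_i}\|_{j_i}}(\omega_{j_i})$ is just $F(0,\dots,0)$, which depends neither on the ordering of the factors (commutativity) nor on how they are bracketed (associativity), since $F$ itself is symmetric and the order of taking limits of a jointly holomorphic function is irrelevant.

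To make this precise I would argue by induction on $k$, closely paralleling the proof of \Cref{prop:3}. The inductive hypothesis gives that $L^{j_2}_{\ell_2}(\lambda_2) \wedge \cdots \wedge L^{j_k}_{\ell_k}(\lambda_k)$ extends to a jointly holomorphic current-valued function $G(\lambda_2,\dots,\lambda_k)$ near the origin in $\C^{k-1}$, and moreover that its value there is computed by the iterated limits in any order. For the inductive step I would then need the joint-in-$\lambda_1$ statement: that $L^{j_1}_{\ell_1}(\lambda_1) \wedge G(\lambda_2,\dots,\lambda_k)$ extends holomorphically across $\lambda_1 = 0$ uniformly in $(\lambda_2,\dots,\lambda_k)$ near $0$. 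This is exactly the content of (a parametrized version of) \Cref{prop:3}: the proof of \Cref{prop:3} expresses $L^{j_1}_{\ell_1}(\lambda_1)\wedge(\cdot)$ via the Poincaré–Lelong-type manipulation in \cref{eq:Lcomp}, writing it in terms of $\|s_{j_1}\|^{2\lambda_1}$ times locally integrable forms and $\bar\partial$ of such, all of which depend holomorphically on $\lambda_1$ near $0$; since this manipulation is $\C$-linear and continuous in the "coefficient" current $G$, and $G$ depends holomorphically on the remaining parameters, the resulting extension is jointly holomorphic. By Hartogs / the Osgood lemma, separate holomorphicity in each group of variables plus local boundedness yields joint holomorphicity, so that the iterated limit equals $F(0,\dots,0)$ regardless of order.

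There is one structural subtlety to handle carefully. \Cref{def:product} is stated only for increasing multi-indices $j_1 < \dots < j_k$; so to even formulate commutativity one must first check that permuting the factors still yields a product covered by the hypotheses (every sub-tuple of $s_1,\dots,s_\kappa$ is a local complete intersection, a condition symmetric in the indices), and then observe that the symmetric object $F(0,\dots,0)$ above agrees with the recursively defined product for the canonical ordering. Associativity requires grouping: one checks that bracketing a subset $\{j_{i_1},\dots,j_{i_m}\}$ first, forming its product as a current, and then wedging corresponds to freezing $\lambda_{i_1},\dots,\lambda_{i_m}$ to $0$ before the others — again legitimate by joint holomorphicity. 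I expect the main obstacle to be precisely this uniformity/joint-holomorphicity bookkeeping: making sure that the estimates underlying \Cref{prop:3} (integrability of $(\log\|s\|^2)^\ell\,\partial\log\|s\|^2$ and the local-complete-intersection input that lets the wedge product $L^{j_1}_{\ell_1}\wedge(\text{current})$ make sense at all) hold locally uniformly in the auxiliary parameters $\lambda_2,\dots,\lambda_k$, rather than just for each fixed value — and then invoking Hartogs to pass from separate to joint holomorphicity so that the order of limits genuinely does not matter. Everything else is a formal consequence of the symmetry of $F$ away from the singular locus $\{\prod\lambda_i = 0\}$.
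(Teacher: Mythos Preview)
Your overall strategy---introduce the several-variable object $F(\lambda_1,\dots,\lambda_k)=L^{j_1}_{\ell_1}(\lambda_1)\wedge\cdots\wedge L^{j_k}_{\ell_k}(\lambda_k)$, show it extends holomorphically to a neighborhood of the origin in $\C^k$, and read off commutativity and associativity from the order-independence of evaluating a jointly holomorphic function at $0$---is exactly what the paper does. In the paper \Cref{prop:3} and \Cref{prop:4} are proved together, and the punchline is \cref{eq:mugrel}: the recursively defined product equals a mixed partial derivative of a single holomorphic function $g(\boldsymbol\lambda)$ at the origin, hence is independent of the order in which the $\lambda_j$ are sent to $0$.

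Where your plan differs is in how joint holomorphicity is obtained. You propose an induction on $k$, using the one-variable Poincar\'e--Lelong manipulation \cref{eq:Lcomp} at the inductive step and Hartogs to pass from separate to joint holomorphicity. The paper instead works directly: it localizes so that $\|s_j\|_j^2=|f_j|^2 v_j$, expands the paired function $g(\boldsymbol\lambda)$ into a finite sum \cref{eq:gsum} of integrals of the shape \cref{eq:håkan}, and invokes the analytic-continuation result of \cite{SK1}, which is stated precisely for holomorphic maps defining a complete intersection and gives joint holomorphicity in one stroke. The ``main obstacle'' you flag---getting the estimates behind \Cref{prop:3} to hold uniformly so that $L^{j_1}_{\ell_1}(\lambda_1)\wedge G(\lambda_2,\dots,\lambda_k)$ extends across $\lambda_1=0$ when $G$ is already a current rather than a smooth form---is not handled by \cref{eq:Lcomp} alone: once $(\lambda_2,\dots,\lambda_k)$ sits near $0$, the factor $\partial\log\|s_{j_1}\|^2$ is being wedged with a principal-value/residue-type current supported on $\{s_{j_2}\cdots s_{j_k}=0\}$, and the $\lambda_1$-regularization of that product is exactly the content of \cite{SK1}. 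So your plan is sound in outline, but to close the inductive step you would end up needing \cite{SK1} (or reproving it), which is precisely the route the paper takes non-inductively.
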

\begin{proof}[Proof of \Cref{prop:3} and \Cref{prop:4}]
    For any $\xi \in \mathscr{D}^{n-\kappa,n-\kappa}(X)$, consider the function
    \begin{equation}
        \label{eq:gdef}
        g \colon \boldsymbol{\lambda} = (\lambda_1, \hdots,\lambda_\kappa) \longmapsto \lambda_1 \cdots \lambda_\kappa \int_X \|s_1\|_1^{2\lambda_1} \cdots \|s_\kappa\|_\kappa^{2\lambda_\kappa} \omega_1 \wedge \cdots \wedge \omega_\kappa \wedge \xi.
    \end{equation}
    We claim that $g$ is holomorphic in a neighborhood of $\{ \boldsymbol{\lambda} \in \C^\kappa : \mathfrak{Re}\,\lambda_j \geq 0 \text{ for } j = 1,\hdots,\kappa \}$. Clearly $g$ is defined and holomorphic if $\mathfrak{Re}\,\lambda_j \gg 0$, for each $1\leq j \leq \kappa$. Moreover, for such $\lambda_j$ we have that
    \begin{align}
        \nonumber
        g(\boldsymbol{\lambda}) &= \lambda_1\cdots\lambda_\kappa \int_X  \bigg(\bigwedge_{j=1}^\kappa \|s_j\|_j^{2\lambda_j} \bar{\partial}\log\|s_j\|_j^{2}\wedge\partial\log\|s_j\|_j^2 \bigg) \wedge \xi \\
        \label{eq:gequality}
        &= \int_X  \bigg(\bigwedge_{j=1}^\kappa \bar{\partial}\|s_j\|_j^{2\lambda_j} \wedge \partial\log\|s_j\|_j^{2}\bigg) \wedge \xi.
    \end{align}
    By a partition of unity argument, we may suppose that $\xi$ has compact support in some chart $U$. Furthermore, we may assume that there is some $0 \leq k \leq \kappa$ such that, after a possible relabeling, $\|s_j\|_j^2 > 0$ in $U$ for each $k+1 \leq j \leq \kappa$, and such that, for each $j=1,\hdots,k$,
    \[
        \|s_j\|_j^2 = |f_j|^2 v_j,
    \]
    in $U$, where $v_j$ is a smooth strictly positive function and $f_j$ is a holomorphic function in $U$. Thus,
    \[
        \theta(\lambda_{k+1},\hdots,\lambda_\kappa) := \bigwedge_{j=k+1}^\kappa \bar{\partial}\|s_j\|_j^{2\lambda_j} \wedge \partial\log\|s_j\|_j^{2}
    \]
    is a smooth $(\kappa-k-1,\kappa-k-1)$-form on $U$ for each $(\lambda_{k+1},\hdots,\lambda_\kappa) \in \C^{\kappa-k-1}$, and we have that
    \begin{equation}
        \label{eq:glocal}
        \begin{aligned}
            g(\boldsymbol{\lambda}) &= \int_X  \bigg(\bigwedge_{j=1}^k \bar{\partial}(|f_j|^{2\lambda} v_j^\lambda) \wedge \Big( \frac{\d f_j}{f_j} + \frac{\partial v_j}{v_j} \Big) \bigg) \wedge \theta \wedge \xi \\
            &= \int_X  \bigg(\bigwedge_{j=1}^k \bigg[ \frac{\bar{\partial}|f_j|^{2\lambda}}{f_j}v_j^{\lambda_j} + \frac{|f_j|^{2\lambda_j}}{f_j} \bar{\partial}v_j^{\lambda_j} \bigg] \wedge \Big( \d f_j + f_j\frac{\partial v_j}{v_j}\Big)\bigg) \wedge \theta \wedge \xi.
        \end{aligned}
    \end{equation}
    It follows that
    \begin{equation}
        \label{eq:gsum}
        g(\boldsymbol{\lambda}) = \sum_{I,J}\int_X \frac{\bar{\partial}|f_{I_1}|^{2\lambda_{I_1}} \wedge \cdots \wedge \bar{\partial}|f_{I_{p}}|^{2\lambda_{I_{p}}} |f_{J_{1}}|^{2\lambda_{J_{1}}} \cdots |f_{J_{q}}|^{2\lambda_{J_q}} }{f_1 \cdots f_k} \wedge \varphi_{I,J}(\boldsymbol{\lambda}),
    \end{equation}
    where the sum runs over all partitions $\{I,J\}$ of $\{1,\hdots,k\}$ into two subsets, where we allow both $I$ and $J$ to be empty separately, and where, for each $\{I,J\}$,
    \[
        \varphi_{I,J}(\boldsymbol{\lambda}) = \pm \bigg( \bigwedge_{i\in I}v_i^{\lambda_i}\Big( \d f_{i} + f_i \frac{\partial v_i}{v_i} \Big) \bigg) \wedge  \bigg( \bigwedge_{j\in J} \bar{\partial}v_j^{\lambda_j} \wedge \Big( \d f_j + f_j \frac{\partial v_j}{v_j}\Big) \bigg) \wedge \theta \wedge \xi.
    \]
     Notice that $\varphi_{I,J}$ is a smooth $(n,n-|I|)$-form with compact support in $U$ for each $\boldsymbol{\lambda} \in \C^\kappa$, and holomorphic in $\boldsymbol{\lambda}$.

    In \cite{SK1} it is shown that for a holomorphic mapping $h=(h_1,\hdots,h_{p+q}) \colon X \rightarrow \C^{p+q}$ defining a complete intersection, a positive integer $N$ and any test form $\varphi \in \mathscr{D}^{n,n-p}(X)$, the map
    \begin{equation}
        \label{eq:håkan}
        \boldsymbol{\lambda} \mapsto \int_X \frac{\bar{\partial}|h_1|^{2\lambda_1} \wedge \cdots \wedge \bar{\partial} |h_p|^{2\lambda_p} |h_{p+1}|^{2\lambda_{p+1}} \cdots |h_{p+q}|^{2\lambda_{p+q}}}{h_1^N \cdots h_{p+q}^N} \wedge \varphi
    \end{equation}
    is analytic in a neighborhood of the half-space $\{\boldsymbol{\lambda} \in \C^{p+q} : \mathfrak{Re}\,\lambda_j \geq 0, 1 \leq j \leq p+q \}$. Since the mapping $(f_{1},\hdots,f_{k}) \colon U \rightarrow \C^k$ by assumption defines a complete intersection, our claim follows in view of \cref{eq:gsum}.

    \medskip

    Now, cf. \cref{eq:gequality} and the first equality in \cref{eq:Lcomp}, for any $(\ell_1,\hdots,\ell_\kappa) \in \mathbb{Z}_{\geq -1}^\kappa$,
    \begin{equation}
        \label{eq:gderiv}
        \begin{aligned}
            \frac{\partial^{\ell_1 + \cdots + \ell_\kappa + \kappa}}{\partial \lambda_1^{\ell_1+1} \cdots \partial \lambda_\kappa^{\ell_\kappa+1}} g(\boldsymbol{\lambda}) &= \int_X \bigg(\bigwedge_{j=1}^\kappa \bar{\partial}\Big( \big(\log\|s_j\|_j^2\big)^{\ell_j+1} \|s_j\|_j^{2\lambda_j}\Big) \wedge \partial \log \|s_j\|_j^2 \bigg) \wedge \xi \\
            &= (\ell_1 + 1)! \cdots (\ell_\kappa+1)! \langle L_{\ell_1}^1(\lambda_1) \wedge \cdots \wedge L_{\ell_\kappa}^\kappa(\lambda_\kappa), \xi \rangle.
        \end{aligned}
    \end{equation}
    It follows that the right-hand side is holomorphic in a neighborhood of the half space $\{ \boldsymbol{\lambda} \in \C^\kappa : \mathfrak{Re}\,\lambda_j \geq 0, 1 \leq j \leq\kappa \}$.

    We can unravel the definition of $\mu_{\ell_1}^{\|s_1\|_1}(\omega_1) \wedge \cdots \wedge \mu_{\ell_\kappa}^{\|s_\kappa\|_\kappa}(\omega_\kappa)$ as follows: By first setting $\lambda_\kappa =0$ in $L_{\ell_1}^1(\lambda_1) \wedge \cdots \wedge L_{\ell_\kappa}^\kappa(\lambda_\kappa)$, keeping $\mathfrak{Re}\,\lambda_j \gg 0$ for $1\leq j \leq \kappa-1$, we obtain $L_{\ell_1}^1(\lambda_1) \wedge \cdots \wedge L_{\ell_{\kappa-1}}^{\kappa-1}(\lambda_{\kappa-1})\wedge \mu_{\ell_\kappa}^{\|s_\kappa\|_\kappa}(\omega_\kappa) $, in view of \cref{eq:elementarymuformula}. Repeating the process for $\lambda_{\kappa-1}$, and so on, in view of \cref{eq:elementarymuformula} and \cref{eq:gderiv}, we obtain
    \begin{equation}
        \label{eq:mugrel}
        \langle \mu_{\ell_1}^{\|s_1\|_1}(\omega_1) \wedge \cdots \wedge \mu_{\ell_\kappa}^{\|s_\kappa\|_\kappa}(\omega_\kappa),\xi\rangle = \frac{1}{(\ell_1 + 1)! \cdots (\ell_\kappa + 1)!} \frac{\partial^{\ell_1 + \cdots + \ell_\kappa + \kappa} g}{\partial \lambda_1^{\ell_1+1} \cdots \partial \lambda_\kappa^{\ell_\kappa+1}} (0,\hdots,0).
    \end{equation}
    \Cref{prop:3} thus follows by the analyticity of $g(\boldsymbol{\lambda})$. Moreover, since the right-hand side of \cref{eq:mugrel} is independent of the order in which we set $\lambda_j$ to $0$, for $j=1,\hdots,\kappa$, \Cref{prop:4} follows.
\end{proof}
The above proof shows that $L^1_{\ell_1}(\lambda_1) \wedge \cdots \wedge L^{\kappa}_{\ell_\kappa}(\lambda_\kappa)$ is holomorphic in a neighborhood of $\{ \boldsymbol{\lambda} \in \C^\kappa : \mathfrak{Re}\,\lambda_j \geq 0, 1 \leq j \leq\kappa \}$. This may be of independent interest and we formulate it as a proposition.
\begin{proposition}
    \label{prop:Lprodholo}
    For any $(\ell_1,\hdots,\ell_\kappa) \in \mathbb{Z}^\kappa_{\geq-1}$, the product
    \[
        L^1_{\ell_1}(\lambda_1) \wedge \cdots \wedge L^{\kappa}_{\ell_\kappa}(\lambda_\kappa),
    \]
    where $L^j_{\ell_j}(\lambda_j)$ is given by \cref{eq:Ljl} for each $j=1,\hdots, \kappa$, is holomorphic in a neighborhood of the half space $\{ \boldsymbol{\lambda} \in \C^\kappa : \mathfrak{Re}\,\lambda_j \geq 0, 1 \leq j \leq\kappa \}$. In particular,
    \[
        L^1_{\ell_1}(\lambda_1) \wedge \cdots \wedge L^{\kappa}_{\ell_\kappa}(\lambda_\kappa)\Big|_{\boldsymbol{\lambda} = 0} = \mu^{\|s_1\|_1}_{\ell_1}(\omega_1) \wedge \cdots \wedge \mu^{\|s_\kappa\|_\kappa}_{\ell_\kappa}(\omega_\kappa).
    \]
\end{proposition}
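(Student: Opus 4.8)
The plan is to read the statement off the proof of \Cref{prop:3,prop:4} above, where all the analytic input has already been assembled; nothing essentially new is needed. Fix a test form $\xi \in \mathscr{D}^{n-\kappa,n-\kappa}(X)$ and start from the function $g(\boldsymbol{\lambda})$ of \cref{eq:gdef}. First I would recall what was proved there: after a partition of unity reduction to a chart, the local expansion \cref{eq:gsum}, and the complete-intersection result of \cite{SK1} on the integrals \cref{eq:håkan}, $g$ extends to a holomorphic function on a neighborhood $\Omega \subseteq \C^\kappa$ of the closed region $\{\mathfrak{Re}\,\lambda_j \geq 0,\ 1 \leq j \leq \kappa\}$. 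Since the poles of the meromorphic continuation of $g$ lie along a locally finite family of affine hyperplanes that do not meet that region, and since a test form is supported in a compact set covered by finitely many charts of the relevant type, $\Omega$ can be taken independent of $\xi$.

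Next I would invoke the identity \cref{eq:gderiv}, which for $\mathfrak{Re}\,\lambda_j \gg 0$ reads
\[
    \langle L^1_{\ell_1}(\lambda_1) \wedge \cdots \wedge L^\kappa_{\ell_\kappa}(\lambda_\kappa), \xi \rangle = \frac{1}{(\ell_1 + 1)! \cdots (\ell_\kappa + 1)!}\, \frac{\partial^{\ell_1 + \cdots + \ell_\kappa + \kappa} g}{\partial \lambda_1^{\ell_1 + 1} \cdots \partial \lambda_\kappa^{\ell_\kappa + 1}}(\boldsymbol{\lambda}).
\]
Because all partial derivatives of a holomorphic function of several complex variables are again holomorphic on the same domain, the right-hand side, and hence $\boldsymbol{\lambda} \mapsto \langle L^1_{\ell_1}(\lambda_1) \wedge \cdots \wedge L^\kappa_{\ell_\kappa}(\lambda_\kappa), \xi \rangle$, extends holomorphically to $\Omega$ for every $\xi$. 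That the resulting extension is current-valued — i.e.\ that $\xi \mapsto \langle L^1_{\ell_1}(\lambda_1) \wedge \cdots \wedge L^\kappa_{\ell_\kappa}(\lambda_\kappa), \xi \rangle$ is continuous on $\mathscr{D}(X)$ for each $\boldsymbol{\lambda} \in \Omega$ — follows, as in \Cref{lem:1} and Section~2, by the standard boundedness arguments for current-valued holomorphic functions (applied to $\mathscr{D}_K$ for each compact $K$), using that the factors $L^j_{\ell_j}$ are honest $L^1_{\mathrm{loc}}$ forms for $\mathfrak{Re}\,\lambda_j$ large and that analytic continuation preserves current-valuedness. This gives the first assertion.

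Finally, for the displayed "in particular" I would unravel \Cref{def:product} exactly as in the discussion preceding \cref{eq:mugrel}: setting $\lambda_\kappa = 0$ in $L^1_{\ell_1}(\lambda_1) \wedge \cdots \wedge L^\kappa_{\ell_\kappa}(\lambda_\kappa)$ while keeping $\mathfrak{Re}\,\lambda_j \gg 0$ for $j < \kappa$ gives $L^1_{\ell_1}(\lambda_1) \wedge \cdots \wedge L^{\kappa-1}_{\ell_{\kappa-1}}(\lambda_{\kappa-1}) \wedge \mu^{\|s_\kappa\|_\kappa}_{\ell_\kappa}(\omega_\kappa)$ by \cref{eq:elementarymuformula}; iterating over $\lambda_{\kappa-1}, \hdots, \lambda_1$ and applying \Cref{def:product} at each step produces $\mu^{\|s_1\|_1}_{\ell_1}(\omega_1) \wedge \cdots \wedge \mu^{\|s_\kappa\|_\kappa}_{\ell_\kappa}(\omega_\kappa)$, which is precisely the value of the (now provably holomorphic) product at $\boldsymbol{\lambda} = 0$. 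I expect no genuinely new obstacle: the one delicate point is the holomorphy of $g$ across the boundary faces $\{\mathfrak{Re}\,\lambda_j = 0\}$, where the local-complete-intersection hypothesis and \cite{SK1} are essential, but that work has already been carried out in the proof of \Cref{prop:3,prop:4}; everything else is formal manipulation of current-valued meromorphic functions.
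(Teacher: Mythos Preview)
Your proposal is correct and follows exactly the route the paper takes: the proposition is stated immediately after the proof of \Cref{prop:3} and \Cref{prop:4} as a direct consequence of that proof, via the holomorphy of $g$ and the identity \cref{eq:gderiv}, together with the unraveling leading to \cref{eq:mugrel}. The only additions in your write-up are the remarks on uniformity of $\Omega$ in $\xi$ and on current-valuedness, which the paper leaves implicit.
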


Now we are ready to prove our main result.
\begin{proof}[Proof of \Cref{thm:1}]
    Recall from the proof of \Cref{prop:3} and \Cref{prop:4} that $g(\boldsymbol{\lambda})$, defined in \cref{eq:gdef}, is holomorphic in a neighborhood of $\{\boldsymbol{\lambda} \in \mathbb{C}^\kappa : \mathfrak{Re}\,\lambda_j \geq 0\text{ for } j = 1,\hdots,\kappa\}$. Let $\mathrm{diag} \colon \C \rightarrow \C^\kappa$ be the diagonal embedding. Then, in view of \cref{eq:gdef},
    \begin{equation}
        \label{eq:gdiag}
        g \circ \mathrm{diag} (\lambda) = \lambda^\kappa \int_X \big(\|s_1\|_1 \cdots \|s_\kappa\|_\kappa\big)^{2\lambda} \omega_1 \wedge \cdots \wedge \omega_\kappa \wedge \xi,
    \end{equation}
    which is holomorphic in a neighborhood of $\mathfrak{Re}\,\lambda \geq 0$ in $\C_\lambda$. Let
    \begin{equation}
        \Xi(\lambda) = \frac{g \circ \mathrm{diag}(\lambda)}{\lambda^\kappa},
    \end{equation}
    which is then a meromorphic function in a neighborhood of $\mathfrak{Re}\,\lambda \geq 0$ with a pole of order at most $\kappa$ at $\lambda=0$. It is clear that, for $\mathfrak{Re}\,\lambda > 0$, $\Xi(\lambda) = \langle \|s\|^{2\lambda}\omega, \xi \rangle$, where $\omega = \omega_1 \wedge \cdots \wedge \omega_\kappa$ and $\|s\|^2 = \|s_1\|^2_1 \cdots \|s_\kappa\|^2_\kappa$. Since $\langle \|s\|^{2\lambda}\omega, \xi \rangle$ extends to a meromorphic function on $\C_\lambda$ so too does $\Xi(\lambda)$. Furthermore, for $\ell \geq -\kappa$, we have by \cref{eq:residue} that
    \begin{align*}
        \langle \mu_{\ell}^{\|s\|}(\omega),\xi \rangle &= \underset{\lambda = 0}{\mathrm{Res}} \Big\{ \lambda^{-(\ell+1)} \Xi(\lambda) \Big\} \\
        &=  \underset{\lambda = 0}{\mathrm{Res}} \Big\{ \lambda^{-(\ell+\kappa +1)} g\circ \mathrm{diag}(\lambda) \Big\} \\
        &= \frac{1}{(\ell+\kappa)!}\frac{\d^{\ell+\kappa} (g \circ \mathrm{diag})}{\d \lambda^{\ell+\kappa}}(0).
    \end{align*}
    Since $g$ is holomorphic in each $\lambda_j$ in a neighborhood of $(\lambda_1,\hdots,\lambda_\kappa) = 0$, it follows by the chain rule that
    \[
         \langle \mu_{\ell}^{\|s\|}(\omega),\xi \rangle = \frac{1}{(\ell+\kappa)!} \sum\limits_{\substack{\ell_1,\hdots,\ell_\kappa \geq -1 \\ \ell_1 + \cdots + \ell_\kappa = \ell}} \frac{(\ell + \kappa)!}{(\ell_1 + 1)! \cdots (\ell_\kappa + 1)!} \frac{\partial^{\ell + \kappa} g}{\partial \lambda_1^{\ell_1+1} \cdots \partial \lambda_\kappa^{\ell_\kappa+1}} (0,\hdots,0).
    \]
    In view of \cref{eq:mugrel}, we see that
    \begin{equation}
        \label{eq:murelation}
        \langle \mu_{\ell}^{\|s\|}(\omega),\xi \rangle = \sum\limits_{\substack{\ell_1,\hdots,\ell_\kappa \geq -1 \\ \ell_1 + \cdots + \ell_\kappa = \ell}} \langle \mu_{\ell_1}^{\|s_1\|_1}(\omega_1) \wedge \cdots \wedge \mu_{\ell_\kappa}^{\|s_\kappa\|_\kappa}(\omega_\kappa), \xi \rangle.
    \end{equation}
    By expanding the right-hand side of \cref{eq:mainresult}, using the commutativity of the current product and collecting terms by order in $\lambda$, we see that \cref{eq:murelation} precisely gives us the desired equality, completing the proof.
\end{proof}
If $s_1 \otimes \cdots \otimes s_\kappa$ defines a normal crossings divisor, then the product of currents \cref{eq:product} is locally essentially just a tensor product. To see this, first off, for any point $p \in X$, for any $1\leq j\leq \kappa$ we can find holomorphic coordinates $z$ and a multi-index $m_j$ such that $\|s_j\|^{2} = |z^{m_j}|^2 v_j$ locally near $p$, where $v_j$ is a smooth positive function. By \cref{eq:elementarymuformula}, for any $\ell_j\geq-1$, we have that
\begin{equation}
    \label{eq:localpresentation}
    \begin{aligned}
        \mu_{\ell_j}^{\|s_j\|_j}(\omega_j) &= \frac{1}{(\ell_j+2)!} \bar{\partial}\partial \big( \log|z^{m_j}|^2 + \log v_j \big)^{\ell_j+2} \\
        &\qquad\qquad - \frac{1}{(\ell_j+1)!}\big(\log|z^{m_j}|^2 + \log v \big)^{\ell_j + 1} \bar{\partial}\partial \log v_j.
    \end{aligned}
\end{equation}
Now, by the locally complete intersection assumption the multi-indices $m_j$ are disjoint. In view of \cref{eq:localpresentation}, it is then straight-forward to check that the product of $\mu_{\ell_j}^{\|s_j\|_j}(\omega_j)$ 
has a natural definition locally, as a sum of tensor products of currents multiplied by smooth forms. 
We will write
\[
    \mu_{\ell_1}^{\|s_1\|_1}(\omega_1) \otimes \cdots \otimes \mu_{\ell_\kappa}^{\|s_\kappa\|_\kappa}(\omega_\kappa)
\]
for this product which, a priori, only makes sense with respect to a choice of local coordinates. However, the following proposition says that this product coincides with \Cref{def:product}.
%
%
%
\begin{proposition}
    \label{prop:5}
    Suppose that $s_{j_1},\hdots,s_{j_\ell}$ is a locally complete intersection for each $1\leq j_1 < \cdots < j_\ell \leq \kappa$ and that $s_1 \otimes \cdots \otimes s_\kappa$ defines a normal crossings divisor. Then, for each $(\ell_1,\hdots,\ell_\kappa) \in \mathbb{Z}_{\geq-1}^\kappa$, 
    \begin{equation}
        \label{eq:productlocalequality}
        \mu_{\ell_1}^{\|s_1\|_1}(\omega_1) \wedge \cdots \wedge \mu_{\ell_\kappa}^{\|s_\kappa\|_\kappa}(\omega_\kappa) = \mu_{\ell_1}^{\|s_1\|_1}(\omega_1) \otimes \cdots \otimes \mu_{\ell_\kappa}^{\|s_\kappa\|_\kappa}(\omega_\kappa).
    \end{equation}
    %
\end{proposition}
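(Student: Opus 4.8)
The plan is to prove \cref{eq:productlocalequality} locally, by exhibiting both the current product of \Cref{def:product} and the coordinate‑wise tensor product as the value at $\boldsymbol\lambda=0$ of a single current‑valued holomorphic function of $\boldsymbol\lambda=(\lambda_1,\dots,\lambda_\kappa)$. First I would fix a point $p\in X$ and holomorphic coordinates $z$ on a neighbourhood $U$ of $p$, together with nowhere‑vanishing smooth functions $v_j$ and multi‑indices $m_j$ such that $\|s_j\|_j^2=|z^{m_j}|^2 v_j$ on $U$; by the locally complete intersection hypothesis the supports $S_j:=\mathrm{supp}(m_j)$ are pairwise disjoint, so, with $S_0=\{1,\dots,n\}\setminus(S_1\cup\cdots\cup S_\kappa)$, we get a local coordinate splitting $\C^n\simeq\C^{S_0}\times\C^{S_1}\times\cdots\times\C^{S_\kappa}$. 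Recall that $\mu_{\ell_1}^{\|s_1\|_1}(\omega_1)\otimes\cdots\otimes\mu_{\ell_\kappa}^{\|s_\kappa\|_\kappa}(\omega_\kappa)$ is, by definition, obtained by inserting the local expressions \cref{eq:localpresentation} for the factors, expanding, and interpreting each resulting product of currents singular only along $\{z_k=0\}$, $k\in S_j$, as a tensor product with respect to this splitting.

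Next I would introduce the two maps $\boldsymbol\lambda\mapsto L^1_{\ell_1}(\lambda_1)\wedge\cdots\wedge L^\kappa_{\ell_\kappa}(\lambda_\kappa)$ and $\boldsymbol\lambda\mapsto L^1_{\ell_1}(\lambda_1)\otimes\cdots\otimes L^\kappa_{\ell_\kappa}(\lambda_\kappa)$, the second being the coordinate‑wise tensor product defined as follows: expand each $L^j_{\ell_j}(\lambda_j)$, via $\log\|s_j\|_j^2=\log|z^{m_j}|^2+\log v_j$ and the Leibniz rule — exactly as in the derivation of \cref{eq:localpresentation}, but keeping $\lambda_j$ — as a finite sum $\sum_\alpha\eta^j_\alpha(\lambda_j)\,t^j_\alpha(\lambda_j)$ with $\eta^j_\alpha$ smooth and $t^j_\alpha$ a current built from $|z^{m_j}|^{2\lambda_j}$, $\log|z^{m_j}|^2$ and its $\partial$‑, $\bar\partial$‑ and $\partial\bar\partial$‑derivatives, hence singular only along $\{z_k=0\}$, $k\in S_j$, and holomorphic in $\lambda_j$ near $\{\mathfrak{Re}\,\lambda_j\geq 0\}$ by \Cref{lem:1}; then set $L^1_{\ell_1}(\lambda_1)\otimes\cdots\otimes L^\kappa_{\ell_\kappa}(\lambda_\kappa):=\sum\pm\big(\prod_j\eta^j_{\alpha_j}(\lambda_j)\big)\,t^1_{\alpha_1}(\lambda_1)\otimes\cdots\otimes t^\kappa_{\alpha_\kappa}(\lambda_\kappa)$, the tensor products being well defined because the factors live on disjoint coordinate subspaces. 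By \Cref{prop:Lprodholo} the first map is current‑valued holomorphic near $K:=\{\mathfrak{Re}\,\lambda_j\geq 0,\ j=1,\dots,\kappa\}$ and its value at $\boldsymbol\lambda=0$ is the left‑hand side of \cref{eq:productlocalequality}. I would then check that the second map is likewise holomorphic near $K$ — this reduces to the standard fact that a tensor product of currents living on disjoint coordinate factors depends holomorphically, jointly, on parameters on which the factors do — and that its value at $\boldsymbol\lambda=0$ is the right‑hand side of \cref{eq:productlocalequality}, using $L^j_{\ell_j}(0)=\mu_{\ell_j}^{\|s_j\|_j}(\omega_j)=\sum_\alpha\eta^j_\alpha(0)\,t^j_\alpha(0)$ from \Cref{lem:1}.

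Finally I would show that the two maps agree for $\mathfrak{Re}\,\lambda_j\gg 0$, which forces them to agree near $K$ by the identity theorem applied to $\langle\,\cdot\,,\xi\rangle$ for each test form $\xi$, and hence at $\boldsymbol\lambda=0$. For such $\boldsymbol\lambda$, each $L^j_{\ell_j}(\lambda_j)$ is an $L^1_{\mathrm{loc}}$ form on $U$ whose coefficients are continuous apart from a simple pole along $\{z_k=0\}$, $k\in S_j$: indeed $(\log\|s_j\|_j^2)^{\ell_j+1}\|s_j\|_j^{2\lambda_j}$ and its $\bar\partial$ are continuous and flat on $\{z^{m_j}=0\}$ once $\mathfrak{Re}\,\lambda_j$ is large, while $\partial\log\|s_j\|_j^2=\sum_{k\in S_j}m_{j,k}\,\d z_k/z_k+\partial\log v_j$ contributes only the stated simple poles; since the $S_j$ are pairwise disjoint, Fubini shows that $L^1_{\ell_1}(\lambda_1)\wedge\cdots\wedge L^\kappa_{\ell_\kappa}(\lambda_\kappa)$ is again an $L^1_{\mathrm{loc}}$ form, equal to the naive product of the coefficient functions — which is exactly the coordinate‑wise tensor product. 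The step I expect to require the most care in a full write‑up is the bookkeeping justifying that evaluation at $\boldsymbol\lambda=0$ commutes with the coordinate‑wise tensor product — i.e. that the tensor product of currents supported on disjoint coordinate factors is holomorphic in, and its evaluation continuous with respect to, parameters on which the factors depend holomorphically — since at $\boldsymbol\lambda=0$ the factors $\mu_{\ell_j}^{\|s_j\|_j}(\omega_j)$ are genuine currents rather than $L^1_{\mathrm{loc}}$ forms.
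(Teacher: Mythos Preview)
Your proposal is correct and the key ingredients are the same as the paper's---the regularizations $L^j_{\ell_j}(\lambda_j)$, the holomorphy established in \Cref{prop:Lprodholo}, and the local description \cref{eq:localpresentation} with pairwise disjoint multi-indices---but the organization is genuinely different. The paper argues by induction on $\kappa$: using that the limit $\boldsymbol\lambda\to 0$ of $L^1_{\ell_1}(\lambda_1)\wedge\cdots\wedge L^\kappa_{\ell_\kappa}(\lambda_\kappa)$ may be taken in any order, it first sets $\lambda_1,\dots,\lambda_{\kappa-1}$ to $0$, invokes the induction hypothesis to replace the resulting $(\kappa-1)$-fold $\wedge$-product by the tensor product, and then lets $\lambda_\kappa\to 0$ in the remaining one-parameter family $\mu_{\ell_1}^{\|s_1\|_1}(\omega_1)\otimes\cdots\otimes\mu_{\ell_{\kappa-1}}^{\|s_{\kappa-1}\|_{\kappa-1}}(\omega_{\kappa-1})\wedge L^\kappa_{\ell_\kappa}(\lambda_\kappa)$. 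Your approach is more symmetric: you introduce the coordinate-wise tensor product $L^1_{\ell_1}(\lambda_1)\otimes\cdots\otimes L^\kappa_{\ell_\kappa}(\lambda_\kappa)$ as a second current-valued holomorphic function of all of $\boldsymbol\lambda$, check that for $\mathfrak{Re}\,\lambda_j\gg 0$ both reduce to the same $L^1_{\mathrm{loc}}$ form, and conclude by the identity theorem. The paper's route has the advantage of only ever needing a \emph{single-variable} continuity statement for the tensor product (namely that wedging a fixed tensor-product current with $L^\kappa_{\ell_\kappa}(\lambda_\kappa)$ is continuous as $\lambda_\kappa\to 0$), whereas your route needs joint holomorphy of the $\kappa$-fold tensor product in $(\lambda_1,\dots,\lambda_\kappa)$---the step you rightly flag as requiring the most care. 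Conversely, your argument avoids the induction entirely and makes the structure of the proof (analytic continuation of two objects that manifestly coincide on an open set) more transparent.
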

\begin{proof}
    We prove \cref{eq:productlocalequality} by induction over $\kappa$. The case $\kappa = 1$ is obvious. Now, suppose that 
    \[
        \mu_{\ell_1}^{\|s_1\|_1}(\omega_1) \wedge \cdots \wedge \mu_{\ell_{\kappa-1}}^{\|s_{\kappa-1}\|_{\kappa-1}}(\omega_{\kappa-1}) = \mu_{\ell_1}^{\|s_1\|_1}(\omega_1) \otimes \cdots \otimes \mu_{\ell_{\kappa-1}}^{\|s_{\kappa-1}\|_{\kappa-1}}(\omega_{\kappa-1})
    \]
    holds. Recall that by \Cref{prop:Lprodholo}, $L^1_{\ell_1}(\lambda_1) \wedge \cdots \wedge L^\kappa_{\ell_\kappa}(\lambda_\kappa)$ is holomorphic in a neighborhood of the half-space $\{\boldsymbol{\lambda}\in\mathbb{C}^\kappa : \mathfrak{Re}\,\lambda_j \geq 0,\ 1\leq j \leq \kappa\}$. Moreover,
    \[
        \lim_{\boldsymbol{\lambda} \rightarrow 0} L^1_{\ell_1}(\lambda_1) \wedge \cdots \wedge L^\kappa_{\ell_\kappa}(\lambda_\kappa) = \mu_{\ell_1}^{\|s_1\|_1}(\omega_1) \wedge \cdots \wedge \mu_{\ell_\kappa}^{\|s_\kappa\|_\kappa}(\omega_\kappa),
    \]
    where the limit can be taken in any order. Thus,
    \begin{align*}
        \mu_{\ell_1}^{\|s_1\|_1}(\omega_1) \wedge \cdots \wedge \mu_{\ell_\kappa}^{\|s_\kappa\|_\kappa}(\omega_\kappa) &= L^1_{\ell_1}(\lambda_1) \wedge \cdots \wedge L^\kappa_{\ell_\kappa}(\lambda_\kappa)\Big|_{\boldsymbol{\lambda} = 0} \\
        &=  L^1_{\ell_1}(0) \wedge \cdots \wedge L^{\kappa-1}_{\ell_{\kappa-1}}(0) \wedge L^\kappa_{\ell_\kappa}(\lambda_\kappa) \Big|_{\lambda_\kappa = 0} \\
        &= \mu_{\ell_1}^{\|s_1\|_1}(\omega_1) \otimes \cdots \otimes \mu_{\ell_{\kappa-1}}^{\|s_{\kappa-1}\|_{\kappa-1}}(\omega_{\kappa-1}) \wedge L^\kappa_{\ell_{\kappa}}(\lambda_\kappa) \Big|_{\lambda_\kappa = 0}.
    \end{align*}
    By \Cref{lem:1},
    \[
        L^\kappa_{\ell_\kappa}(\lambda_\kappa) = \frac{1}{(\ell_\kappa+1)!} \bar{\partial}\Big( \big( \log\|s_\kappa\|_\kappa^2 \big)^{\ell+1} \|s_\kappa\|^{2\lambda_\kappa}_\kappa \Big) \wedge \partial \log \|s_\kappa\|_\kappa^2 \underset{\lambda_\kappa \rightarrow 0}{\longrightarrow} \mu_{\ell_\kappa}^{\|s_\kappa\|_\kappa}(\omega_\kappa). 
    \]
    Again, since $\mu_{\ell_j}^{\|s_j\|_j}(\omega_j)$ are locally of the form \cref{eq:localpresentation} where $m_j$, for $j=1,\hdots,\kappa$, are disjoint multi-indices, it follows that
    \[
        \mu_{\ell_1}^{\|s_1\|_1}(\omega_1) \otimes \cdots \otimes \mu_{\ell_{\kappa-1}}^{\|s_{\kappa-1}\|_{\kappa-1}}(\omega_{\kappa-1}) \wedge L^\kappa_{\ell_{\kappa}}(\lambda_\kappa) \Big|_{\lambda_\kappa = 0} = \mu_{\ell_1}^{\|s_1\|_1}(\omega_1) \otimes \cdots \otimes \mu_{\ell_\kappa}^{\|s_\kappa\|_\kappa}(\omega_\kappa),
    \]
    completing the proof.
\end{proof}

\section{Finite part algorithm}

Recall from the introduction, cf. \cref{eq:finitepartmodification}, that calculating the finite part of any quasi-meromorphic form on a reduced analytic space can be reduced to a calculation of the finite part of a quasi-meromorphic form $\omega$, on a manifold, with singularities along a normal crossings divisor. Here we detail the outline presented in the introduction of the calculation of the finite part of such an $\omega$. The proofs are collected in Section 5.2.

\subsection{Finite part algorithm for a quasi-meromorphic form singular along a normal crossings divisor}

Let $X$ be a complex $n$-dimensional manifold and $\omega \in \mathcal{A}^{n,n}_{s,\|\cdot\|}(X)$, where $s$ is a holomorphic section of a Hermitian line bundle $(L,\|\cdot\|)$ over $X$ and $D = \mathrm{div}(s)$ is a normal crossings divisor.
\begin{lemma}
    \label{prop:1}
    There are quasi-meromorphic forms $\omega^{\mathrm{tame}}$ and $\gamma$ such that
    \begin{equation}
        \label{eq:tameplusexact}
        \omega = \omega^{\mathrm{tame}} + \d \gamma,
    \end{equation}
    where $\omega^{\mathrm{tame}}$ has tame singularities along $D$.
\end{lemma}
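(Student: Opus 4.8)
The plan is to work locally on a coordinate chart where $D$ is cut out by $z_1\cdots z_\kappa$, decompose $\omega$ using the structure of quasi-meromorphic forms, and peel off the ``non-tame'' pieces as exact terms. Since $\omega \in \mathcal{A}^{n,n}_{s,\|\cdot\|}(X)$ and $D$ is normal crossings, on a chart $U$ with coordinates $z = (z_1,\ldots,z_n)$ such that $s$ vanishes to order $m_j$ along $\{z_j = 0\}$ for $j=1,\ldots,\kappa$, we know $\|s\|^{2N}\omega$ extends smoothly across $D\cap U$ for some $N$; hence on $U$ we can write $\omega$ as a smooth top form divided by $|z_1\cdots z_\kappa|^{2N}$ (up to a smooth positive factor coming from the metric). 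The first step is to expand such a form in the ``basis'' $\{\partial\log\|s_j\|_j^2,\ \bar\partial\log\|s_j\|_j^2\}$ together with the $dz_i, d\bar z_i$ for $i > \kappa$ — this is essentially the content of \Cref{cor:1} / \cref{eq:tamedecomposition}, which I would invoke once the tame part is isolated. The genuinely non-tame contributions are the ones where some $z_j$ appears in the denominator to order strictly greater than $1$.

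The key reduction is the identity
\begin{equation}
    \label{eq:partialfraction}
    \frac{dz_j}{z_j^{k}} = -\frac{1}{k-1}\,d\!\left(\frac{1}{z_j^{k-1}}\right), \qquad k \geq 2,
\end{equation}
and its conjugate, which lets us trade a high-order pole along a single component for an exact form (note $1/z_j^{k-1}$ is still quasi-meromorphic with singularities in $(s)$, so the primitive stays in the allowed class). The strategy is an induction on the total pole order $\sum_j (\mathrm{ord}_j - 1)_+$ of $\omega$ along the components of $D$: given $\omega$ with some component appearing to order $k\geq 2$, write the corresponding $dz_j/z_j^k \wedge (\cdots)$ factor using \cref{eq:partialfraction}, pull the $d$ outside using the Leibniz rule (the error term, where $d$ hits the remaining smooth-times-lower-order factor, has strictly smaller total pole order), and absorb it. After finitely many steps one is left with a form having only simple poles along each component, i.e. a sum of terms of the shape $\widetilde\omega_J \wedge \bigwedge_{j\in J}(dz_j\wedge d\bar z_j)/|z_j|^2$ with $\widetilde\omega_J$ smooth — which is precisely a tame form in the sense defined after \cref{eq:tamedecomposition}. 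Replacing $dz_j/z_j = \partial\log|z_j|^2$ (modulo smooth forms, absorbed by \cref{eq:linearity}) and similarly for $\bar\partial\log\|s_j\|_j^2$ puts it in the form \cref{eq:tamedecomposition}.

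The final step is globalization: the local construction must be patched to a global identity $\omega = \omega^{\mathrm{tame}} + d\gamma$ on all of $X$. The standard device is a partition of unity $\{\rho_\alpha\}$ subordinate to a cover by such charts $U_\alpha$; on each $U_\alpha$ write $\rho_\alpha\omega = \omega^{\mathrm{tame}}_\alpha + d\gamma_\alpha$ with both pieces supported in $U_\alpha$ (so they extend by zero), and set $\omega^{\mathrm{tame}} = \sum_\alpha \omega^{\mathrm{tame}}_\alpha$, $\gamma = \sum_\alpha \gamma_\alpha$. One must check that $\omega^{\mathrm{tame}}$, being a locally finite sum of forms with tame singularities along $D$, again has tame singularities along $D$ — this is where one needs that the ``tame'' class is closed under addition and multiplication by smooth forms, which follows since in a common chart all summands are smooth multiples of the finitely many model forms $\bigwedge_{j\in J}(dz_j\wedge d\bar z_j)/|z_j|^2$.

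I expect the main obstacle to be bookkeeping rather than conceptual: tracking that every primitive introduced via \cref{eq:partialfraction} stays in $\mathcal{A}_{s,\|\cdot\|}(X)$ (so that $\gamma$ is genuinely quasi-meromorphic and $d\gamma$ makes sense as claimed), and verifying that the induction on total pole order actually terminates when $\partial/\bar\partial$ Leibniz error terms are accounted for across all components simultaneously — i.e. choosing the right monovariant. The cross-terms mixing $z_j$ in the denominator with $d\bar z_j$ in the numerator (where \cref{eq:partialfraction} does not directly apply) need the observation that $\bar\partial(1/z_j^{k-1}) = 0$ away from $D$, so those pieces are already handled, or else are rewritten using $d = \partial + \bar\partial$ before the reduction.
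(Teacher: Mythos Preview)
Your proposal is correct and follows essentially the same route as the paper: localize via a partition of unity to charts where $\omega$ is a smooth top form divided by $|z_1|^{2m_1}\cdots|z_\kappa|^{2m_\kappa}$, then repeatedly apply the identity $dz_j/z_j^{k} = -\tfrac{1}{k-1}\,d(1/z_j^{k-1})$ (and its conjugate) to peel off exact quasi-meromorphic pieces until only simple poles remain, and sum. Two minor remarks: since $\omega$ is a top form it already contains every $dz_j$ and every $d\bar z_j$, so your worry about ``cross-terms'' is moot; and the digression about the basis $\{\partial\log\|s_j\|_j^2,\bar\partial\log\|s_j\|_j^2\}$ and \Cref{cor:1} is not needed for this lemma (that further decomposition of $\omega^{\mathrm{tame}}$ is a separate statement).
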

Recall that tame singularities means that locally $\omega^{\mathrm{tame}} = \widetilde{\omega}/|z_1\hdots z_\kappa|^2$, where $\widetilde{\omega}$ is smooth. \Cref{prop:1} has implications for the finite part $\mathrm{fp}\int_X \omega$. Let $\omega = \omega_0^{\mathrm{tame}} + \d \gamma_0$ be a decomposition of $\omega$ as in \Cref{prop:1}. For any $j\geq -n$, by \cref{eq:additivity} and \cref{eq:thm4.1ii}
\begin{align*}
    \mu_{j}^{\|s\|}(\omega) &= \mu_{j}^{\|s\|}(\omega^{\mathrm{tame}}_0 + \d \gamma_0) \\
    &= \mu_{j}^{\|s\|}(\omega^{\mathrm{tame}}_0) + \mu_{j}^{\|s\|}(\d \gamma_0) \\
    &= \mu_{j}^{\|s\|}(\omega^{\mathrm{tame}}_0) + \d \mu_{j}^{\|s\|}(\gamma_0) - \mu_{j-1}^{\|s\|}\bigg( \frac{\d \|s\|^2}{\|s\|^2} \wedge \gamma_0\bigg).
\end{align*}
Applying \Cref{prop:1} again, this time to the quasi-meromorphic top form $\frac{\d \|s\|^2}{\|s\|^2} \wedge \gamma_0$, we get $\frac{\d \|s\|^2}{\|s\|^2} \wedge \gamma_0 = \omega_1^{\mathrm{tame}} + \d \gamma_1$, where $\gamma_1$ is quasi-meromorphic and where $\omega_1^{\mathrm{tame}}$ has tame singularities. Again, by \cref{eq:linearity} and \cref{eq:thm4.1ii}, we have that
\[
    \mu_{j-1}^{\|s\|}\bigg( \frac{\d \|s\|^2}{\|s\|^2} \wedge \gamma_0\bigg) = \mu_{j-1}^{\|s\|}(\omega^{\mathrm{tame}}_1) + \d \mu_{j-1}^{\|s\|}(\gamma_1) - \mu^{\|s\|}_{j-2}\bigg( \frac{\d \|s\|^2}{\|s\|^2} \wedge \gamma_1\bigg).
\]
Thus,
\[
    \mu_j^{\|s\|}(\omega) = \mu_j^{\|s\|}(\omega_0^{\mathrm{tame}}) - \mu_{j-1}^{\|s\|}(\omega_1^{\mathrm{tame}}) + \d \big( \mu_j^{\|s\|}(\gamma_0) - \mu_{j-1}^{\|s\|}(\gamma_1)\big) + \mu_{j-2}^{\|s\|}\bigg( \frac{\d \|s\|^2}{\|s\|^2} \wedge \gamma_1\bigg).
\]
Continuing iteratively, noting that $\mu_{-n-1}^{\|s\|}(\omega') = 0$ for any quasi-meromorphic form $\omega'$, we find quasi-meromorphic forms $\omega_\ell^{\mathrm{tame}}$ and $\gamma_\ell$, for $\ell = 2,\hdots,n+j$, where $\omega_\ell^{\mathrm{tame}}$ has tame singularities such that
\[
    \mu_{j}^{\|s\|}(\omega) = \sum_{\ell=0}^{n+{j}} (-1)^{\ell} \mu_{j-\ell}^{\|s\|}(\omega^{\mathrm{tame}}_\ell) + \d \Big(\mu_{j}^{\|s\|}(\gamma_0) - \mu_{j-1}^{\|s\|}(\gamma_1)+ \cdots + (-1)^{n+j}\mu_{-n}^{\|s\|}(\gamma_{n+j})\Big).
\]
For $j=0$ we obtain formula \cref{eq:tamecohomology} by passing to cohomology. Thus, by \cref{eq:tamecohomology},
\begin{equation}
    \label{eq:finitepartcalc}
    \mathrm{fp}\int_X\omega = \langle \mu_0^{\|s\|}(\omega),1\rangle = \sum\limits_{\ell=0}^n (-1)^\ell \langle \mu_{-\ell}^{\|s\|}(\omega_\ell^{\mathrm{tame}}),1\rangle.
\end{equation}

For a general quasi-meromorphic top form $\omega^{\mathrm{tame}}$ with tame singularities along a normal crossings divisor, the following result shows that $\omega^{\mathrm{tame}}$ is composed of elementary quasi-meromorphic forms. 
\begin{proposition}
    \label{cor:1}
    Let $\omega^{\mathrm{tame}}$ be a quasi-meromorphic top form with tame singularities along $D$. Then there are holomorphic sections $s_j$ of Hermitian line bundles $(L_j, |\cdot|_j)$, for $j=1,\hdots,q$, such that $s_{j_1}, \hdots, s_{j_{k}}$ is a locally complete intersection for each $1\leq j_1 < \cdots < j_{k}\leq q$, where $k \leq \min\{n,q\}$, and
    \begin{equation}
        \label{eq:omegadecomp}
        \omega^{\mathrm{tame}} = \sum_{\substack{J \subseteq \{1,\hdots,q\} \\ 0 \leq |J| \leq \mathrm{min}\{n,q\}}}\widetilde{\omega}_J \wedge \bigwedge_{j\in J} \partial \log|s_j|_j^2 \wedge \bar{\partial}\log|s_j|_j^2,
    \end{equation}
    where $\widetilde{\omega}_J$ is a smooth $(n-|J|, n-|J|)$-form on $X$.
\end{proposition}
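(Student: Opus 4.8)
The plan is to work locally and then patch. Fix a point $p \in X$ and holomorphic coordinates $z = (z_1,\hdots,z_n)$ centered at $p$ in which $D = \{z_1 \cdots z_\kappa = 0\}$ for some $0 \leq \kappa \leq n$, so that $\omega^{\mathrm{tame}} = \widetilde{\omega}/|z_1 \cdots z_\kappa|^2$ with $\widetilde{\omega}$ a smooth top form. First I would expand $\widetilde{\omega}$ in the local frame $\d z_i, \d\bar z_i$ and sort the monomials according to which of the ``singular'' differentials $\d z_j/z_j$ and $\d\bar z_j/z_j$ (for $1 \leq j \leq \kappa$) appear. Concretely, writing $\eta_j := \d z_j / z_j$ and noting that $\partial \log |z_j|^2 = \eta_j$ modulo smooth forms (indeed $\partial \log|z_j|^2 = \d z_j/z_j$ exactly if we use the trivial metric, and in general differs by a smooth form), every term of $\omega^{\mathrm{tame}}$ can be written as $\alpha_J \wedge \bigwedge_{j \in J}(\d z_j/z_j)\wedge(\d\bar z_j/z_j)$ summed over subsets $J \subseteq \{1,\hdots,\kappa\}$, where $\alpha_J$ is smooth; here I use that $|z_1 \cdots z_\kappa|^{-2}$ distributes over the $z_j$'s and that any $\d z_j$ or $\d \bar z_j$ with $j \in J$ appearing in $\widetilde{\omega}$ gets absorbed, while $z_j$-factors not paired with a differential must be absorbed into $\alpha_J$ (using that $\widetilde\omega$ is smooth, hence divisible by $z_j \bar z_j$ in the appropriate sense to kill the pole whenever the corresponding differential is absent — this is exactly the tameness/smoothness of $\widetilde\omega$ combined with the fact that $\omega^{\mathrm{tame}}$ lies in $\mathcal{A}^{n,n}_{s}$).

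The second step is to replace $\d z_j/z_j \wedge \d\bar z_j/z_j$ by $\partial \log|s_j|_j^2 \wedge \bar\partial \log|s_j|_j^2$. Globally, let $s_j$ be a holomorphic section of a line bundle $L_j$ with $\mathrm{div}(s_j) = D_j$, the $j$-th irreducible component of $D$, and fix any smooth Hermitian metric $|\cdot|_j$ on $L_j$; in the chart $U$ one has $|s_j|_j^2 = |z_j|^2 v_j$ with $v_j$ smooth and positive (after shrinking $U$ and relabeling). Then $\partial \log |s_j|_j^2 = \d z_j/z_j + \partial \log v_j$ and similarly for $\bar\partial$, so $\partial \log|s_j|_j^2 \wedge \bar\partial \log|s_j|_j^2 = (\d z_j/z_j)\wedge(\d\bar z_j/z_j)$ plus terms each containing at least one smooth factor $\partial \log v_j$ or $\bar\partial \log v_j$ in place of a singular differential. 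Substituting and re-sorting, the ``error'' terms get reabsorbed into the smooth coefficients $\widetilde\omega_{J'}$ for smaller $J'$, by downward induction on $|J|$: one starts from $|J| = \kappa$ (or $|J| = \min\{n,q\}$ globally), where the identification is exact up to smooth factors, and works down, at each stage moving the mismatch into strictly smaller subsets. This yields, in each chart, a representation of the form \cref{eq:omegadecomp} with smooth coefficients.

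The third step is patching: the decomposition \cref{eq:omegadecomp} is not canonical in a chart, but the claim is global, so I would take a locally finite cover $\{U_\alpha\}$ by such charts with a subordinate partition of unity $\{\chi_\alpha\}$, write $\omega^{\mathrm{tame}} = \sum_\alpha \chi_\alpha \omega^{\mathrm{tame}}$, apply the local construction to each $\chi_\alpha \omega^{\mathrm{tame}}$ (whose support lies in $U_\alpha$), and sum; linearity of the bundle data $s_j, |\cdot|_j$ (which are fixed globally once and for all — each $D_j$ is a globally defined irreducible component of $D$, cut out by a global section $s_j$ of a global line bundle $L_j$, equipped with a global metric) means the sums combine into a single global expression of the asserted shape, with $\widetilde\omega_J := \sum_\alpha \chi_\alpha \widetilde\omega_{J,\alpha}$ smooth. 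The locally complete intersection property of any subfamily $s_{j_1},\hdots,s_{j_k}$ is automatic since $D$ is normal crossings: distinct components meet transversally, so their defining sections form a regular sequence locally, and $k \leq \min\{n,q\}$ because at most $n$ smooth hypersurfaces in normal crossing can pass through a point and there are $q$ components total. The main obstacle is the bookkeeping in the second step — verifying that the tameness hypothesis genuinely forces the ``unpaired'' $z_j$ and $\bar z_j$ factors to divide the smooth coefficient, so that no residual poles survive when a singular differential is absent; this requires carefully using that $\omega^{\mathrm{tame}} \in \mathcal{A}^{n,n}_s$ has a pole of order \emph{exactly} one in each $z_j$ (no worse, and compensated precisely by the paired differentials) rather than merely being locally $L^1$.
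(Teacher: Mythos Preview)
Your second step contains a genuine gap. When you replace $\frac{\d z_j}{z_j}\wedge\frac{\d\bar z_j}{\bar z_j}$ by $\partial\log|s_j|_j^2\wedge\bar\partial\log|s_j|_j^2$, the error you must absorb is
\[
-\,\partial\log|s_j|_j^2\wedge\bar\partial\log v_j\;-\;\partial\log v_j\wedge\bar\partial\log|s_j|_j^2\;+\;\partial\log v_j\wedge\bar\partial\log v_j,
\]
and the first two summands are \emph{not} smooth: each carries a simple pole along $\{z_j=0\}$ through $\partial\log|s_j|_j^2$ (respectively $\bar\partial\log|s_j|_j^2$) alone, with no matching conjugate pole to pair it into the allowed building block $\partial\log|s_j|_j^2\wedge\bar\partial\log|s_j|_j^2$. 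Already for $n=\kappa=1$, taking $\widetilde\omega_{\{1\}}=f$ and $\widetilde\omega_\emptyset=\omega^{\mathrm{tame}}-f\,\partial\log|s_1|_1^2\wedge\bar\partial\log|s_1|_1^2$ gives
\[
\widetilde\omega_\emptyset=-f\Big(\tfrac{\partial_{\bar z}\log v}{z}+\tfrac{\partial_z\log v}{\bar z}+\partial_z\log v\cdot\partial_{\bar z}\log v\Big)\,\d z\wedge\d\bar z,
\]
which is singular whenever the metric weight $v$ is nonconstant. Your downward induction therefore does not close up: the mismatch cannot be pushed into smooth coefficients $\widetilde\omega_{J'}$ for smaller $J'$. (By contrast, the ``main obstacle'' you flag---unpaired $z_j$-factors---is a non-issue: $\omega^{\mathrm{tame}}$ is a top form, so every $\d z_j$ and $\d\bar z_j$ with $1\le j\le\kappa$ is automatically present and the sorting in your first step is trivial.)

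The paper avoids this by a division trick rather than expand-and-collect. One sets $\Theta=\bigwedge_{j=1}^{\kappa}\partial\log|s_j|_j^2\wedge\bar\partial\log|s_j|_j^2$ and expands $\Theta=\sum_{J,K}\psi_{J,K}\,\d z_J\wedge\d\bar z_K$; the coefficient of $\d z_1\wedge\d\bar z_1\wedge\cdots\wedge\d z_\kappa\wedge\d\bar z_\kappa$ satisfies $|z_1\cdots z_\kappa|^2\psi_{J_0,K_0}=1+\mathcal{O}(|z_1|)+\cdots+\mathcal{O}(|z_\kappa|)$, hence is invertible near $\{z_1=\cdots=z_\kappa=0\}$. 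Then $\widetilde\omega:=\eta/(|z_1\cdots z_\kappa|^2\psi_{J_0,K_0})$ is a smooth $(n-\kappa,n-\kappa)$-form and $\Theta\wedge\widetilde\omega=\omega^{\mathrm{tame}}$, because wedging with $\widetilde\omega$ (which supplies $\d z_{\kappa+1}\wedge\cdots\wedge\d\bar z_n$ to reach top degree) annihilates every component $\psi_{J,K}\,\d z_J\wedge\d\bar z_K$ except the one with $(J,K)=(J_0,K_0)$. So locally a \emph{single} subset $J=\{1,\dots,k_p\}$ suffices, with smooth coefficient obtained by division; the sum over $J$ in \eqref{eq:omegadecomp} arises only from the partition of unity. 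This invertibility-and-division step is the key idea your plan is missing.
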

In view of \cref{eq:linearity} and \cref{eq:additivity} we get
\begin{equation}
    \label{eq:mujtamedecomp}
    \mu_j^{\|s\|}(\omega^{\mathrm{tame}}) = \sum_{|J|=0}^{\min\{n,q\}}\widetilde{\omega}_J \wedge \mu_j^{\|s\|}(\omega_J),
\end{equation}
where $\omega_J = \bigwedge_{j\in J} \partial \log|s_j|_j^2 \wedge \bar{\partial}\log|s_j|_j^2$.

\Cref{thm:1} is not applicable directly here. One can assume that the $s_j$ in \Cref{cor:1} are such that $s=s_1 \otimes \cdots \otimes s_q$ and $L = L_1\otimes \cdots \otimes L_q$. In fact, the proof of \Cref{cor:1} gives sections $s_j$ of $L_j$ such that $\mathrm{div}(s_j)$ are the irreducible components of $D$. For suitable $k_j \in \mathbb{N}$ we then have that $s = s_1^{k_1} \otimes \cdots \otimes s_{q}^{k_q}$, possibly up to an invertible holomorphic function, and $L = L_1^{k_1} \otimes \cdots \otimes L_q^{k_q}$. With the induced metric on $L_j^{k_j}$ we have $\bar{\partial}\log|s_j^{k_j}|^2_{j} \wedge \partial\log|s^{k_j}|^2_{j} = k_j^2 \bar{\partial}\log|s_j|^2_{j} \wedge \partial\log|s|^2_{j}$. We can thus replace $s_j$ by $s_j^{k_j}$ in \cref{eq:omegadecomp} if $\widetilde{\omega}_J$ is replaced by $\widetilde{\omega}_J / \prod_{j\in J}k_j^2$.

Now, since $L = L_1\otimes \cdots \otimes L_q$, the metrics on $L_j$ induce a metric $|\cdot|$ on $L$. Then, Theorem 1.1 in \cite{S1} states that
\begin{equation}
    \label{eq:metricformula}
    \mu_j^{\|s\|}(\omega_J) = \sum\limits_{\ell=0}^{n+j} \frac{1}{\ell!}\bigg(\log\frac{\|s\|^2}{|s|^2}\bigg)^\ell\mu_{j-\ell}^{|s|}(\omega_J),
\end{equation}
relating the currents defined with respect to the two different metrics. \Cref{thm:1} is still not directly applicable unless $J=\{1,\hdots,q\}$. First we apply the following variation of Theorem 1.1 in \cite{S1}:
\begin{proposition}
    \label{prop:6}
    For any tuple $J=(J_1,\hdots,J_k)$ with $1\leq J_1 < \cdots < J_k \leq q$, let $s_J = s_{J_1} \otimes \cdots \otimes s_{J_k} \colon X \rightarrow \bigotimes_{j\in J}L_j =: L_J$. Let $|\cdot|_J$ be the smooth Hermitian metric on $L_J$ defined by $|s_J|_J^2 = |s_{J_1}|_{J_1}^2 \cdots |s_{J_k}|_{J_k}^2$. We have that
    \begin{equation}
         \label{eq:generalized_metric_dependence_formula}
        \mu_\ell^{|s|}(\omega_J) = \sum\limits_{\ell'=0}^{k+\ell}\frac{1}{\ell'!} \bigg( \log\frac{|s|^2}{|s_J|^2_J}\bigg)^{\ell'} \mu_{\ell -\ell'}^{|s_J|_J}(\omega_J),
    \end{equation}
    for each $\ell$.
\end{proposition}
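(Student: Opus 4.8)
The plan is to adapt the argument used to prove Theorem~1.1 in \cite{S1} (of which \cref{eq:metricformula} is the special case of two metrics on the same line bundle), the new feature being that the ``perturbation'' relating the two metrics, namely
\[
    \phi := \log\frac{|s|^2}{|s_J|^2_J} = \sum_{i\notin J}\log|s_i|_i^2,
\]
is no longer smooth but carries logarithmic singularities along the divisors $\mathrm{div}(s_i)$, $i\notin J$. These divisors are, by the local complete intersection hypothesis together with the normal crossings assumption, in general position with the singular locus $\mathrm{div}(s_J)$ of $\omega_J$, which is what will let us treat $\phi$ as if it were smooth in all the places that matter. I would begin, as in the proofs of \Cref{prop:3}, \Cref{prop:4} and \Cref{thm:1}, by fixing a test form $\xi\in\mathscr{D}^{n-k,n-k}(X)$ and introducing the $q$-parameter function
\[
    g(\boldsymbol{\lambda}) = \prod_{j\in J}\lambda_j \int_X \Big(\bigwedge_{j\in J}|s_j|_j^{2\lambda_j}\,\partial\log|s_j|_j^2\wedge\bar{\partial}\log|s_j|_j^2\Big)\wedge \prod_{i\notin J}|s_i|_i^{2\lambda_i}\wedge\xi,
\]
which is defined and holomorphic for $\mathfrak{Re}\,\lambda_i\gg0$. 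Rewriting $\lambda_j|s_j|_j^{2\lambda_j}\,\partial\log|s_j|_j^2\wedge\bar{\partial}\log|s_j|_j^2$ as $\partial\log|s_j|_j^2\wedge\bar{\partial}|s_j|_j^{2\lambda_j}$ and expanding in a local chart exactly as in \cref{eq:glocal}--\cref{eq:gsum}, the claim that $g$ extends holomorphically to a neighborhood of $\{\boldsymbol\lambda\in\C^q:\mathfrak{Re}\,\lambda_i\geq0,\ 1\leq i\leq q\}$ follows from \cite{SK1}, using that $s_1,\dots,s_q$ is a local complete intersection (here the factors $|s_i|_i^{2\lambda_i}$, $i\notin J$, play the role of the plain powers in \cref{eq:håkan}).

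Next I would restrict $g$ to the affine plane $\{\lambda_j=\lambda\ (j\in J),\ \lambda_i=\mu\ (i\notin J)\}$, obtaining a function $h(\lambda,\mu)$, holomorphic near $(0,0)$, with
\[
    h(\lambda,\lambda) = \lambda^k\,\langle\, |s|^{2\lambda}\omega_J,\xi\,\rangle, \qquad \frac{\partial^b h}{\partial \mu^b}(\lambda,0) = \lambda^k\,\langle\, |s_J|_J^{2\lambda}\phi^b\omega_J,\xi\,\rangle .
\]
In particular $\langle |s_J|_J^{2\lambda}\phi^b\omega_J,\xi\rangle$ is meromorphic near $\lambda=0$ with a pole of order at most $k$, so it has well-defined Laurent coefficients which I denote $\langle\mu_m^{|s_J|_J}(\phi^b\omega_J),\xi\rangle$ (for $b=0$ these are the usual $\langle\mu_m^{|s_J|_J}(\omega_J),\xi\rangle$). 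Comparing the coefficient of $\lambda^{\ell+k}$ on the two sides of $h(\lambda,\lambda)=\sum_{a,b\geq0}\tfrac{\lambda^{a+b}}{a!\,b!}\,\partial_\lambda^a\partial_\mu^b h(0,0)$ and using the two displayed identities, one gets, cf.\ \cref{eq:residue},
\[
    \mu_\ell^{|s|}(\omega_J) = \sum_{\ell'=0}^{k+\ell}\frac{1}{\ell'!}\,\mu_{\ell-\ell'}^{|s_J|_J}(\phi^{\ell'}\omega_J).
\]
This is the same bookkeeping as in the passage from \cref{eq:gdiag} to \cref{eq:murelation}, except that here one cluster of variables is kept separate (collapsed to $\mu$) rather than being merged into the diagonal.

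It then remains to identify $\mu_{\ell-\ell'}^{|s_J|_J}(\phi^{\ell'}\omega_J)$ with $\phi^{\ell'}\mu_{\ell-\ell'}^{|s_J|_J}(\omega_J)$, i.e.\ to commute multiplication by the power $\phi^{\ell'}$ past the operation of extracting the Laurent coefficient at $\lambda=0$ of $|s_J|_J^{2\lambda}\omega_J$. If $\phi$ were smooth this would be immediate from \cref{eq:linearity}; the content is that it persists even though $\phi$ is only $L^1_{\mathrm{loc}}$, and I expect this to be the main obstacle. The plan is to argue locally: by the normal crossings hypothesis one can choose holomorphic coordinates in which the components of $\mathrm{div}(s_J)$ are among the coordinate hyperplanes while the components of $\mathrm{div}(s_i)$, $i\notin J$, are other, disjoint coordinate hyperplanes; then $\omega_J$ --- hence $|s_J|_J^{2\lambda}\omega_J$ and each $\mu_m^{|s_J|_J}(\omega_J)$, cf.\ \cref{eq:localpresentation} --- is singular only in the first group of coordinates, whereas $\phi$ is the sum of a function smooth in that first group and of logarithms of the remaining coordinates. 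Thus multiplication by $\phi^{\ell'}$ is transverse to the regularization $|s_J|_J^{2\lambda}$ and commutes with extracting its Laurent coefficients; this is the same type of local computation that underlies the proof of \Cref{prop:5}. Granting it, the previous display becomes \cref{eq:generalized_metric_dependence_formula} with $\phi=\log(|s|^2/|s_J|^2_J)$, completing the proof.
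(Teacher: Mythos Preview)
Your approach is correct and runs parallel to the paper's, with essentially the same multi-parameter analytic function $g(\boldsymbol\lambda)$ and the same appeal to \cite{SK1} for its analytic continuation. The paper packages the argument through the generalized version of \Cref{thm:1} (set up in the paragraphs immediately preceding \Cref{prop:6}, where the $\omega_j$ for $j\notin J$ are taken to be $1$), then collapses the $J$-factors back using the standard \Cref{thm:1} and recognizes the remaining factors as the Taylor expansion of $(|s|^2/|s_J|_J^2)^\lambda$. Your restriction to the two-parameter plane $\{\lambda_j=\lambda\ (j\in J),\ \lambda_i=\mu\ (i\notin J)\}$ is just a different slicing of the same holomorphic $g$; the bookkeeping that yields your displayed identity is identical to the paper's coefficient comparison in \cref{eq:prop5.4eq3}.

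The one point where you diverge is the final ``identification'' step, which you flag as the main obstacle. In the paper this obstacle does not exist, because the product $\big(\log(|s|^2/|s_J|_J^2)\big)^{\ell'}\mu_{\ell-\ell'}^{|s_J|_J}(\omega_J)$ on the right-hand side of \cref{eq:generalized_metric_dependence_formula} is \emph{defined} via the regularization you have already set up: the paragraph just before \Cref{prop:6} explicitly says these products ``are of currents, and thus not a priori defined'' and are to be understood in analogy with \Cref{def:product} via \cref{eq:newproduct}. With that definition, your $\mu_{\ell-\ell'}^{|s_J|_J}(\phi^{\ell'}\omega_J)$ (Laurent coefficient after setting $\mu=0$) \emph{is} the paper's $\phi^{\ell'}\mu_{\ell-\ell'}^{|s_J|_J}(\omega_J)$, by the order-independence established in \Cref{prop:4} (extended to this setting). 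So no separate local transverse-variables argument is needed here; the computation you sketch is essentially the content of \Cref{prop:5}, which is a finer statement than what \Cref{prop:6} requires.
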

The proposed products on the right-hand side of \cref{eq:generalized_metric_dependence_formula} are of currents, and thus not a priori defined. However, these products can be defined in analogy with \Cref{def:product}, see \cref{eq:newproduct} below.

\Cref{thm:1} is now applicable to the currents $\mu_j^{|s_J|_J}(\omega_J)$ and gives
\begin{equation}
    \label{eq:thm1application}
    \mu_j^{|s_J|_J}(\omega_J) = \sum\limits_{\substack{\ell_1, \hdots,\ell_{|J|}\geq-1 \\ \ell_1 + \cdots + \ell_{|J|} = j}}\bigwedge_{k=1}^{|J|} \mu_{\ell_k}^{|s_{J_k}|_{J_k}}\big(\partial\log|s_{J_k}|_{J_k}^2 \wedge \bar{\partial}\log|s_{J_k}|_{J_k}^2 \big).
\end{equation}
To summarize, $\mathrm{fp}\int_X \omega$ can now in principle be calculated in view of \cref{eq:finitepartcalc}, \cref{eq:mujtamedecomp} \cref{eq:metricformula}, \cref{eq:generalized_metric_dependence_formula}, \cref{eq:thm1application} and the explicit formula \cref{eq:elementarymuformula} for $\mu_\ell^{|s_j|_j}(\omega_j)$. To write the full formula, assume for simplicity that $\|\cdot\|$ is equal to the metric $|\cdot|$ induced by the metrics $|\cdot|_j$ on $L_j$ for $j=1,\hdots,q$. Then
\begin{equation}
    \label{eq:masterformula}
    \begin{aligned}
        \mu_{\ell}^{|s|}(\omega) &= \sum_{|J|=0}^{\min\{n,q\}}\widetilde{\omega}_J \wedge \sum\limits_{\ell' = 0}^{|J|+\ell} \frac{1}{\ell'}\bigg( \log\frac{|s|^2}{|s_J|^2_J}\bigg)^{\ell'} \mu_{\ell-\ell'}^{|s_J|_J}(\omega_J) \\
        &= \sum_{|J|=0}^{\min\{n,q\}} \widetilde{\omega}_J \wedge \sum\limits_{\ell' = 0}^{|J|+\ell} \frac{1}{\ell'!} \bigg( \log\frac{|s|^2}{|s_J|^2_J}\bigg)^{\ell'} \sum\limits_{\substack{\ell_1,\hdots,\ell_{|J|} \geq -1 \\ \ell_1 + \cdots + \ell_{|J|} = \ell - \ell'}} \bigwedge_{k=1}^{|J|} \mu_{\ell_k}^{|s_{J_k}|_{J_k}}(\omega_{J_k}).
    \end{aligned}
\end{equation}

\subsection{Proofs of \Cref{prop:1}, \Cref{cor:1} and \Cref{prop:6}}

\begin{proof}[Proof of \Cref{prop:1}]
    We can find an open covering $\{U_\alpha\}$ of $X$ such that, in each chart $U_\alpha$, there are holomorphic coordinates $z =(z_1,\hdots,z_n)$, and an integer $\kappa$ such that $V = \{z_1\cdots z_\kappa = 0\}$ (suppressing the dependence on the chart $U_\alpha$ in the notation). Let $\{\rho_\alpha\}$ be a partition of unity subordinate to $\{U_\alpha\}$, and write $\omega = \sum_\alpha \rho_\alpha \omega$. Since $\omega$ is quasi-meromorphic, $\rho_\alpha \omega$ is quasi-meromorphic and there are positive integers $m_1,\hdots,m_\kappa$ (again these depend on the chart $U_\alpha$) such that
    \[
        \rho_\alpha\omega = \frac{\psi \,\d z_1 \wedge \d \bar{z}_1 \wedge \cdots \wedge \d z_n \wedge \d \bar{z}_n}{|z_1|^{2m_1} \cdots |z_\kappa|^{2m_\kappa}},
    \]
    where $\psi$ is a smooth function with compact support in $U_\alpha$. Assuming $m_1>1$, we have that
    \begin{align*}
        \rho_\alpha\omega &= \frac{\d z_1}{z_1^{m_1}}\wedge \frac{\psi \,\d \bar{z}_1 \wedge \cdots \wedge \d z_n \wedge \d \bar{z}_n}{\bar{z}_1^{m_1} |z_2|^{2m_2}\cdots |z_\kappa|^{2m_\kappa}} \\
        &= -\frac{1}{m_1 - 1} \d \bigg( \frac{1}{z_1^{m_1-1}}\bigg)\wedge \frac{\psi \,\d \bar{z}_1 \wedge \cdots \wedge \d z_n \wedge \d \bar{z}_n}{\bar{z}_1^{m_1} |z_2|^{2m_2}\cdots |z_\kappa|^{2m_\kappa}} \\
        &= \d \bigg( \frac{-1}{m_1 - 1}\frac{\psi \,\d \bar{z}_1 \wedge \cdots \wedge \d z_n \wedge \d \bar{z}_n}{z_1^{m_1 - 1} \bar{z}_1^{m_1} |z_2|^{2m_2}\cdots |z_\kappa|^{2m_\kappa}} \! \bigg)\! +\frac{1}{m_1 - 1}\frac{\d\psi \wedge \d \bar{z}_1 \wedge \cdots \wedge \d z_n \wedge \d \bar{z}_n}{z_1^{m_1 - 1} \bar{z}_1^{m_1} |z_2|^{2m_2}\cdots |z_\kappa|^{2m_\kappa}},
    \end{align*}
    outside $V$. Note that the expression in brackets above is quasi-meromorphic. Moreover,
    \[
        \frac{1}{m_1 - 1}\frac{\d\psi \wedge \d \bar{z}_1 \wedge \cdots \wedge \d z_n \wedge \d \bar{z}_n}{z_1^{m_1 - 1} \bar{z}_1^{m_1} |z_2|^{2m_2}\cdots |z_\kappa|^{2m_\kappa}} = \frac{\widetilde{\psi}\,\d z_1 \wedge \d \bar{z}_1 \wedge \cdots \wedge \d z_n \wedge \d \bar{z}_n}{z_1^{m_1 - 1} \bar{z}_1^{m_1} |z_2|^{2m_2}\cdots |z_\kappa|^{2m_\kappa}},
    \]
    where $\widetilde{\psi} = (\partial \psi /\partial z_1)/(m_1-1)$ is smooth with compact support in $U_\alpha$. It is clear that we can repeat the above process $m_1-2$ times, as well as, in the same way, $m_j - 1$ times in $z_j$ for each $j = 2,\hdots,\kappa$. Analogously, it can be repeated $m_j-1$ times in $\bar{z}_j$ for each $j=1,\hdots,\kappa$. Doing this, and collecting all the exact terms in $\d \gamma_\alpha$, we find that
    \[
        \rho_\alpha \omega = \frac{\psi_0\, \d z_1 \wedge \d \bar{z}_1 \wedge \cdots \wedge \d z_n \wedge \d \bar{z}_n}{|z_1 \cdots z_\kappa|^2} + \d \gamma_\alpha,
    \]
    where
    \[
        \psi_0 = \frac{1}{(m_1-1)!^2 \cdots (m_\kappa-1)!^2} \frac{\partial^{2\sum_{j=1}^\kappa m_j - 2\kappa} \psi}{ \partial z_1^{m_1-1} \partial \bar{z}_1^{m_1-1} \cdots \partial z_\kappa^{m_\kappa-1}\partial \bar{z}_\kappa^{m_\kappa-1}}
    \]
    is a smooth function with compact support in $U_\alpha$ and $\gamma_\alpha$ is quasi-meromorphic. Clearly
    \[
        \omega^{\mathrm{tame}}_\alpha := \frac{\psi_0\, \d z_1 \wedge \d \bar{z}_1 \wedge \cdots \wedge \d z_n \wedge \d \bar{z}_n}{|z_1 \cdots z_\kappa|^2}
    \]
    has tame singularities along $V\cap U_\alpha$. Now, letting $\omega^{\mathrm{tame}}:=\sum_\alpha \omega_\alpha^{\mathrm{tame}}$ and $\gamma := \sum_\alpha \gamma_\alpha$, \cref{eq:tameplusexact} follows, finishing the proof.
\end{proof}

\medskip

The proof of \Cref{cor:1} relies on the following local statement:
\begin{lemma}
    \label{lem:}
    Suppose
    \[
        \omega = \frac{\d z_1 \wedge \d \bar{z}_1 \wedge \cdots \wedge \d z_\kappa \wedge \d \bar{z}_\kappa}{|z_1 \cdots z_\kappa|^2} \wedge \eta,
    \]
    where $\eta$ is a smooth $(n-\kappa,n-\kappa)$-form on $\C^n$. Let $e^{-\phi_1},\hdots,e^{-\phi_\kappa}$ be smooth positive functions on $\C^n$ and let $|\cdot|_\ell := |\cdot| e^{-\phi_\ell/2}$ for $\ell=1,\hdots,\kappa$, where $|\cdot|$ is the standard absolute value on $\C$. Locally, in a neighborhood of $\{z_1 = \cdots = z_\kappa = 0\}$, we can find a smooth $(n-\kappa,n-\kappa)$-form $\widetilde{\omega}$ such that
    \[
        \omega = \bar{\partial} \log |z_1|_1^2 \wedge \partial \log|z_1|_1^2 \wedge \cdots \wedge  \bar{\partial} \log |z_\kappa|_\kappa^2 \wedge \partial \log |z_\kappa|_\kappa^2 \wedge \widetilde{\omega}.
    \]
\end{lemma}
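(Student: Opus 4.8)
The plan is to reduce the claim to a single pointwise computation by making the singularity of each factor $\bar\partial\log|z_\ell|_\ell^2\wedge\partial\log|z_\ell|_\ell^2$ completely explicit. Writing $\log|z_\ell|_\ell^2=\log|z_\ell|^2-\phi_\ell$, we get
\[
\partial\log|z_\ell|_\ell^2=\frac{\d z_\ell}{z_\ell}-\partial\phi_\ell=\frac{1}{z_\ell}\,\zeta_\ell,\qquad \bar\partial\log|z_\ell|_\ell^2=\frac{\d\bar z_\ell}{\bar z_\ell}-\bar\partial\phi_\ell=\frac{1}{\bar z_\ell}\,\bar\zeta_\ell,
\]
where $\zeta_\ell:=\d z_\ell-z_\ell\,\partial\phi_\ell$ and $\bar\zeta_\ell:=\d\bar z_\ell-\bar z_\ell\,\bar\partial\phi_\ell$ are \emph{smooth} forms, of bidegree $(1,0)$ and $(0,1)$ respectively. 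Hence
\[
\bar\partial\log|z_\ell|_\ell^2\wedge\partial\log|z_\ell|_\ell^2=\frac{\bar\zeta_\ell\wedge\zeta_\ell}{|z_\ell|^2}=-\,\frac{\zeta_\ell\wedge\bar\zeta_\ell}{|z_\ell|^2},
\]
and, since each $\zeta_\ell\wedge\bar\zeta_\ell$ has even degree, taking the wedge over $\ell=1,\dots,\kappa$ gives
\[
\bigwedge_{\ell=1}^\kappa\bigl(\bar\partial\log|z_\ell|_\ell^2\wedge\partial\log|z_\ell|_\ell^2\bigr)=\frac{(-1)^\kappa}{|z_1\cdots z_\kappa|^2}\,\zeta_1\wedge\bar\zeta_1\wedge\cdots\wedge\zeta_\kappa\wedge\bar\zeta_\kappa.
\]

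Next I would use that $\zeta_\ell$ and $\bar\zeta_\ell$ reduce to $\d z_\ell$ and $\d\bar z_\ell$ at every point of $\{z_1=\cdots=z_\kappa=0\}$. It follows that the smooth $2n$-form
\[
\zeta_1\wedge\bar\zeta_1\wedge\cdots\wedge\zeta_\kappa\wedge\bar\zeta_\kappa\wedge\d z_{\kappa+1}\wedge\d\bar z_{\kappa+1}\wedge\cdots\wedge\d z_n\wedge\d\bar z_n=J\,\d z_1\wedge\d\bar z_1\wedge\cdots\wedge\d z_n\wedge\d\bar z_n
\]
for a smooth function $J$ equal to $1$ on $\{z_1=\cdots=z_\kappa=0\}$, so $J$ is nonvanishing on some neighbourhood $U$ of that set. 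Likewise, since $\d z_1\wedge\d\bar z_1\wedge\cdots\wedge\d z_\kappa\wedge\d\bar z_\kappa\wedge\eta$ is a smooth $(n,n)$-form, there is a smooth function $\eta^{\flat}$ with
\[
\d z_1\wedge\d\bar z_1\wedge\cdots\wedge\d z_\kappa\wedge\d\bar z_\kappa\wedge\eta=\eta^{\flat}\,\d z_1\wedge\d\bar z_1\wedge\cdots\wedge\d z_n\wedge\d\bar z_n.
\]

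I would then take, on $U$,
\[
\widetilde\omega:=\frac{(-1)^\kappa\,\eta^{\flat}}{J}\,\d z_{\kappa+1}\wedge\d\bar z_{\kappa+1}\wedge\cdots\wedge\d z_n\wedge\d\bar z_n,
\]
which is a smooth $(n-\kappa,n-\kappa)$-form, and verify the identity directly. Wedging $\widetilde\omega$ into the product computed above and cancelling the two factors $(-1)^\kappa$,
\[
\bigwedge_{\ell=1}^\kappa\bigl(\bar\partial\log|z_\ell|_\ell^2\wedge\partial\log|z_\ell|_\ell^2\bigr)\wedge\widetilde\omega=\frac{\eta^{\flat}}{J\,|z_1\cdots z_\kappa|^2}\,\zeta_1\wedge\bar\zeta_1\wedge\cdots\wedge\zeta_\kappa\wedge\bar\zeta_\kappa\wedge\d z_{\kappa+1}\wedge\d\bar z_{\kappa+1}\wedge\cdots\wedge\d z_n\wedge\d\bar z_n,
\]
and by the defining property of $J$ the right-hand side equals $\eta^{\flat}\,|z_1\cdots z_\kappa|^{-2}\,\d z_1\wedge\d\bar z_1\wedge\cdots\wedge\d z_n\wedge\d\bar z_n$, which in turn is $\omega$ by the definition of $\eta^{\flat}$. (All these identities are between forms on $U\setminus\{z_1\cdots z_\kappa=0\}$, where everything in sight is defined.)

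The whole argument is bookkeeping once the first step is in place; the only points requiring any care are the signs in the collapse of the singular factors and the nonvanishing of $J$ near $\{z_1=\cdots=z_\kappa=0\}$, which is immediate from $J\equiv 1$ there. The one genuinely useful observation, which I expect to be the crux, is the rewriting $\bar\partial\log|z_\ell|_\ell^2\wedge\partial\log|z_\ell|_\ell^2=-|z_\ell|^{-2}\,\zeta_\ell\wedge\bar\zeta_\ell$ with $\zeta_\ell,\bar\zeta_\ell$ smooth and reducing to $\d z_\ell,\d\bar z_\ell$ along $\{z_1=\cdots=z_\kappa=0\}$: it converts a question about singular forms into one about a nonvanishing change of coframe.
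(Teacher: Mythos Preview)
Your proof is correct and follows essentially the same approach as the paper. Both arguments expand $\partial\log|z_\ell|_\ell^2=\frac{\d z_\ell}{z_\ell}-\partial\phi_\ell$ (and its conjugate), factor out $|z_1\cdots z_\kappa|^{-2}$ from the resulting $(\kappa,\kappa)$-form, observe that the remaining smooth part reduces to $\d z_1\wedge\d\bar z_1\wedge\cdots\wedge\d z_\kappa\wedge\d\bar z_\kappa$ along $\{z_1=\cdots=z_\kappa=0\}$, and then divide by the resulting nonvanishing coefficient; the paper does this by expanding into coordinate components and isolating the coefficient $\psi_{J_0,K_0}$ of $\d z_1\wedge\d\bar z_1\wedge\cdots\wedge\d z_\kappa\wedge\d\bar z_\kappa$, whereas you package the same computation via the smooth forms $\zeta_\ell,\bar\zeta_\ell$ and the scalar $J$ (so in effect $J=|z_1\cdots z_\kappa|^2\psi_{J_0,K_0}$ up to a sign).
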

\begin{proof}
    Let $\Theta = \bigwedge_{\ell=1}^{\kappa} \partial\log|z_\ell|_\ell^2 \wedge \bar{\partial}\log|z_\ell|_\ell^2 $. Expanding
    \begin{align}
        \nonumber
        \Theta &= \bigwedge_{\ell=1}^{\kappa} \partial \log\big( |z_\ell|^2 e^{-\phi_\ell}\big) \wedge \bar{\partial } \log \big( |z_\ell|^2 e^{-\phi_\ell} \big) \\
        \nonumber
        &= \bigwedge_{\ell=1}^{\kappa} \bigg(\frac{\d z_\ell}{z_\ell} - \partial \phi_\ell \bigg) \wedge \bigg(\frac{\d \bar{z}_\ell}{\bar{z}_\ell} - \bar{\partial}\phi_\ell \bigg) \\
        \nonumber
        &= \frac{1}{|z_1 \cdots z_\kappa|^2}\bigwedge_{\ell=1}^{\kappa} \Big(\d z_\ell \wedge \d \bar{z}_\ell - z_\ell \,\partial \phi_\ell \wedge \d \bar{z}_\ell - \bar{z}_\ell \,\d z_\ell \wedge \bar{\partial} \phi_\ell + |z_\ell|^2 \partial\phi_\ell \wedge \bar{\partial}\phi_\ell \Big) \\
        \label{eq:psicombination}
        &= \sum_{\substack{J,K\subset\{1,\hdots,n\}\\|J|,|K|=\kappa}} \psi_{J,K} \d z_J \wedge \d \bar{z}_K.
    \end{align}
    It is clear that the coefficient $\psi_{J_0,K_0}$ of the $(\kappa,\kappa)$-form $\d z_1 \wedge \d \bar{z}_1 \wedge \cdots\wedge \d z_\kappa \wedge \d \bar{z}_\kappa$ in the above expression satisfies
    \[
        |z_1 \cdots z_\kappa|^2 \psi_{J_0,K_0} =  1 + \mathcal{O}(|z_1|) + \cdots + \mathcal{O}(|z_\kappa|).
    \]
    Thus, we can find a neighborhood of $\{z_1 = \cdots = z_\kappa = 0\}$ where $|z_1\cdots z_\kappa|^2 \psi_{J_0,K_0}$ is non-zero. Now, let
    \[
        \widetilde{\omega} = \frac{\eta}{|z_1 \cdots z_\kappa|^2\psi_{J_0,K_0}},
    \]
    which is then a smooth form in this neighborhood. Notice that $\eta$ contains all $\d z_j \wedge \d \bar{z}_j$ for $j=\kappa+1,\hdots,n$. Then we have that
    \[
        \Theta \wedge \widetilde{\omega} = \frac{\d z_1 \wedge \d \bar{z}_1 \wedge \cdots \wedge \d z_\kappa \wedge \d \bar{z}_\kappa}{|z_1 \cdots z_\kappa|^2} \wedge \eta = \omega,
    \]
    since the only term in \cref{eq:psicombination} that contributes is $\psi_{J_0,K_0} \d z_1 \wedge \d \bar{z}_1 \wedge \cdots \wedge \d z_\kappa \wedge \d \bar{z}_\kappa$.
\end{proof}
\begin{proof}[Proof of \Cref{cor:1}]
    Let $D_j$, for $j=1,\hdots,q$, be the irreducible components of $D$ and let $s_j$ be holomorphic sections of $L_j$ such that $\mathrm{div}(s_j) = D_j$. Equip $L_j$ with Hermitian metrics $|\cdot|_j$. Since $D$ is a normal crossings divisor with smooth irreducible components, it follows that $s_{j_1}, \hdots, s_{j_k}$ is a locally complete intersection for each $1\leq j_1 < \cdots < j_{k} \leq q$ with $k\leq \min\{n,q\}$.
    
    For each point $p \in \{s_1\otimes \hdots \otimes s_q=0\}$, there is some number $k_p \leq \min\{n,q\}$ such that, after a possible relabeling of the sections $s_j$, $p \in \{s_1 = \cdots = s_{k_p} = 0\} \cap \{s_{k_p+1}, \hdots, s_{q} \neq 0 \}$. Let $U$ be an open neighborhood of $p$ such that $U \cap \{ s_j = 0 \} = \emptyset$, for $j = k_p+1,\hdots,q$. Since $\omega$ has tame singularities, we can find holomorphic coordinates $z = (z_1,\hdots,z_n)$ in $U$ such that
    \[
        \omega = \frac{\d z_1\wedge \d \bar{z}_1 \wedge \cdots \wedge \d z_{k_p} \wedge \d \bar{z}_{k_p}}{|z_1 \cdots z_{k_p}|^2} \wedge \eta,
    \]
    in $U$, where $\eta$ is a smooth $(n-k_p,n-k_p)$. Moreover, after a possible relabeling of the coordinate functions $z_{1},\hdots,z_{k_p}$, locally we have that $|s_{\ell}|_\ell^2 = |z_{\ell}|^2 e^{-\phi_{\ell}}$, where $\phi_{\ell} \in \mathscr{C}^\infty(U)$ is a local weight for the metric $|\cdot|_\ell$ in $U$, for $\ell=1,\hdots,k_p$. Thus, by \Cref{lem:} we can find a smooth $(n-k_p,n-k_p)$-form $\widetilde{\omega}$ such that
    \[
        \omega = \bigg( \bigwedge_{j=1}^{k_p} \partial\log|s_j|_j^2\wedge\bar{\partial}\log|s_j|_j^2 \bigg) \wedge \widetilde{\omega},
    \]
    in some (possibly smaller) neighborhood $\widetilde{U} \ni p$ contained in $U$. Now, covering $X$ by open sets $V$ as above and introducing a partition of unity subordinate to this cover, the result is immediate.
\end{proof}
Now, suppose we are in the setting of \Cref{prop:6}, and consider the products on the right-hand side of \cref{eq:generalized_metric_dependence_formula}. We claim that they indeed can be defined in analogy with \Cref{def:product}. The key observation is the following: The distribution-valued mapping $\lambda \mapsto |s_j|^{2\lambda}_j$, for $j=k+1,\hdots,q$, is holomorphic in a neighborhood of $\lambda = 0$ and has the Taylor expansion
\begin{equation}
    \label{eq:taylorexp}
    |s_j|_j^{2\lambda} = \sum\limits_{\ell=0}^\infty \lambda^\ell \mu_\ell^{|s_j|_j}(1),
\end{equation}
about the origin, where 
\[
    \mu_{\ell}^{|s_j|_j}(1) = \frac{1}{\ell!}\frac{\d^{\ell}}{\d \lambda^{\ell}}\big( |s_j|_j^{2\lambda} \big)\Big|_{\lambda = 0} = \frac{1}{\ell!}\big(\log|s_j|_j^2\big)^{\ell},
\]
c.f. \cref{eq:residue}.

Recall that $J=(J_1,\hdots,J_k)$, where $1\leq J_1 < \cdots < J_k \leq q$, and $k\leq \min\{n,q\}$. Without loss of generality we may assume that $J = \{1,\hdots, k\}$. Let
\begin{equation}
    \label{eq:omegajcases}
    \omega_j = 
    \begin{cases}
        \bar{\partial} \log|s_j|_j^2 \wedge \partial \log|s_j|_j^2 &\text{ if } j \leq k \\
        1 &\text{ if } k+1 \leq 1 \leq q.
    \end{cases}
\end{equation}
Now we define the product
\begin{equation}
    \label{eq:newproduct}
    \mu_{\ell_1}^{|s_1|_q}(\omega_1) \wedge \cdots \wedge \mu_{\ell_q}^{|s_q|_q}(\omega_q)
\end{equation}
precisely as in \Cref{def:product} with respect to the natural regularization of $\omega_j$ for each $j$, that is,
\begin{equation}
    L^j_\ell(\lambda) = 
    \begin{cases}
        \frac{1}{(\ell+1)!} \bar{\partial}\big( ( \log|s|_j^2 )^{\ell+1} |s|_j^{2\lambda} \big) \wedge \partial \log |s|_j^2, \text{ for } \ell \geq -1 &\text{if } j \leq n, \\
        \frac{1}{\ell!}|s_j|_j^{2\lambda} (\log|s_j|_j^2 )^{\ell}, \text{for } \ell \geq 0 &\text{if } k+1 \leq j \leq q, \\
        0 &\text{otherwise}.
    \end{cases}
\end{equation}
To see that \cref{eq:newproduct} is well-defined, commutative and associative we must verify \Cref{prop:3} and \Cref{prop:4} in this more general setting. Since both \Cref{prop:3} and \Cref{prop:4} are local statements, it suffices to fix a point $p \in X$ and consider \cref{eq:newproduct} in a small neighborhood $U \ni p$.

Note that by the assumption that $s_{j_1},\hdots,s_{j_m}$ is a locally complete intersection for each $1\leq j_1<\hdots<j_m \leq q$ ($m \leq \min\{n,q\}$), there is some $I \subseteq \{1,\hdots,q\}$ such that $|I| \leq n$ with $s_j|_U \neq 0$ for each $j \notin I$. Thus, $\mu_{\ell_j}^{|s_j|_j}(\omega_j)$ is smooth in $U$ for each $j \notin I$. Thus, we reduce to the case where $q \leq n$. The proofs of \Cref{prop:3} and \Cref{prop:4} can then be carried through in the same way with the obvious modifications.

Similarly, \Cref{thm:1} holds in the slightly more general setting when $\omega_j$ is of the form \cref{eq:omegajcases}, for $j=1,\hdots,q$. Indeed, \Cref{thm:1} is too a local statement, and can be checked in a neighborhood of a point $p \in X$. Again, the assumption that $s_{j_1},\hdots,s_{j_m}$ is a locally complete intersection for each $1\leq j_1<\hdots<j_m \leq q$ ($m \leq \min\{n,q\}$) means that we can reduce to the case where $q \leq n$. Then the proof can be carried through in the same way with the obvious modifications. Note that, since $\mu_{-1}^{|s_j|_j}(\omega_j) = 0$ for each $k+1\leq j \leq q$, it immediately follows, in view of \cref{eq:mainresult}, that $\mu_{\ell}^{|s|}(\omega_1\wedge \cdots \wedge \omega_q) = 0$ for $\ell < -k$.
\begin{proof}[Proof of \Cref{prop:6}]
    In view of \cref{eq:omegajcases}, $\omega_J = \omega_1 \wedge \cdots \wedge \omega_q$. By \Cref{thm:1} in the generalized setting, 
    \begin{align}
        \label{eq:prop5.4eq1}
        \sum\limits_{\ell=-k}^\infty \lambda^\ell \mu_\ell^{|s|}(\omega_J) &= \sum\limits_{\ell=-k}^\infty \lambda^\ell \mu_\ell^{|s|}(\omega_1 \wedge \cdots \wedge \omega_q) \\
        \nonumber
        &= \bigg( \sum\limits_{\ell_1 = -1}^\infty \lambda^{\ell_1} \mu_{\ell_1}^{|s_1|_1}(\omega_1) \bigg) \wedge \cdots \wedge \bigg( \sum\limits_{\ell_q= 0}^\infty \lambda^{\ell_q} \mu_{\ell_q}^{|s_q|_q}(\omega_q) \bigg).
    \end{align}
    By the standard version of \Cref{thm:1} we also have that
    \begin{equation}
        \label{eq:prop5.4eq2}
        \bigg( \sum\limits_{\ell_1 = -1}^\infty \lambda^{\ell_1} \mu_{\ell_1}^{|s_1|_1}(\omega_1) \bigg) \wedge \cdots \wedge \bigg( \sum\limits_{\ell_k= -1}^\infty \lambda^{\ell_k} \mu_{\ell_k}^{|s_k|_k}(\omega_k) \bigg) = \sum\limits_{j=-k}^\infty \lambda^j \mu_{j}^{|s_J|_J}(\omega_J).
    \end{equation}
    Moreover, in view of \cref{eq:taylorexp} and \cref{eq:omegajcases}, we have that
    \[
        \bigg( \sum\limits_{\ell_{k+1}= 0}^\infty \lambda^{\ell_{k+1}} \mu_{\ell_{k+1}}^{|s_{k+1}|_{k+1}}(\omega_{k+1}) \bigg)\wedge \cdots \wedge \bigg( \sum\limits_{\ell_q= 0}^\infty \lambda^{\ell_q} \mu_{\ell_q}^{|s_q|_q}(\omega_q) \bigg) = \Big(|s_{k+1}|_{k+1}^{2} \cdots |s_q|^2_q\Big)^\lambda \!\!.
    \]
    Thus, by \cref{eq:prop5.4eq1} and \cref{eq:prop5.4eq2} we find that
    \begin{equation}
        \label{eq:prop5.4eq3}
        \sum\limits_{\ell=-k}^\infty \lambda^\ell \mu_\ell^{|s|}(\omega_J) = \Big(|s_{k+1}|_{k+1}^{2} \cdots |s_q|^2_q\Big)^\lambda \sum\limits_{j=-k}^\infty \lambda^j \mu_{j}^{|s_J|_J}(\omega_J).
    \end{equation}
    Expanding $\Big(|s_{k+1}|_{k+1}^{2} \cdots |s_q|^2_q\Big)^\lambda$ in a Taylor series about $\lambda = 0$ we find that
    \begin{align*}
        \Big(|s_{k+1}|_{k+1}^{2} \cdots |s_q|^2_q\Big)^\lambda &= \sum\limits_{\ell=0}^\infty \frac{\lambda^\ell}{\ell!}\Big( \log\big(|s_{k+1}|_{k+1}^2 \cdots |s_q|_q^2\big) \Big)^\ell \\
        &= \sum\limits_{\ell=0}^\infty \frac{\lambda^{\ell}}{\ell!} \bigg( \log\frac{|s|^2}{|s_J|^2_J}\bigg)^{\ell}.
    \end{align*}
    Thus, by \cref{eq:prop5.4eq3}
    \begin{align*}
        \sum\limits_{\ell=-k}^\infty \lambda^\ell \mu_\ell^{|s|}(\omega_J) &= \sum\limits_{\ell'=0}^\infty \frac{\lambda^{\ell'}}{\ell'!} \bigg( \log\frac{|s|^2}{|s_J|^2_J}\bigg)^{\ell'}\sum\limits_{j=-k}^\infty \lambda^j \mu_{j}^{|s_J|_J}(\omega_J) \\
        &= \sum\limits_{\ell'=0}^\infty\sum\limits_{j=-k}^\infty \frac{\lambda^{\ell' + j}}{\ell'!} \bigg( \log\frac{|s|^2}{|s_J|^2_J}\bigg)^{\ell'} \mu_{j}^{|s_J|_J}(\omega_J).
    \end{align*}
    Comparing coefficients in front of $\lambda^\ell$, \cref{eq:generalized_metric_dependence_formula} follows by a straight-forward calculation, completing the proof.
\end{proof}

\section{Examples on $\mathbb{P}^n$}

First, some notation. Let $[Z_0 : \cdots : Z_n]$ be homogeneous coordinates on $\mathbb{P}^n$, regarded as sections of $\mathcal{O}(1)\rightarrow \mathbb{P}^n$. Let $z = (z_1,\hdots,z_n)$ be local holomorphic coordinates on the open chart $\{Z_0 \neq 0 \} \subset \mathbb{P}^n$ defined by $z_j = Z_j/Z_{0}$ for $j=1,\hdots,n$. Let $\omega_{\mathrm{FS}}$ be the Fubini--Study form on $\mathbb{P}^n$, given by
\[
    \omega_{\mathrm{FS}} = \frac{i}{2}\partial\bar{\partial}\log(1+|z|^2),
\]
in $\{Z_0 \neq 0 \}$, where $|z|^2 = |z_1|^2 + \cdots + |z_n|^2$. Let $\|\cdot\|$ be the Fubini--Study metric on $\mathcal{O}(1)$. Then,
\[
    \|Z_j\|^2 = \frac{|Z_j|^2}{|Z|^2},
\]
where $|Z|^2 = |Z_0|^2 + \cdots + |Z_n|^2$. Let
\[
    \omega = \frac{i^n}{2^n}\frac{\d z_1 \wedge \cdots \wedge \d \bar{z}_n}{|z_1\cdots z_n|^2}.
\]
It is straight-forward to check that
\begin{equation}
    \label{eq:omegaformula}
    \omega = \frac{1}{n!}\frac{1}{\|Z_0\|^2\cdots \|Z_n\|^2} \omega_{\mathrm{FS}}^{\wedge n},
\end{equation}
from which it is clear that $\omega$ is a quasi-meromorphic top form on $\mathbb{P}^n$ with singularities along the normal crossings divisor $s = Z_0\cdots Z_n \colon \mathcal{O}(n+1) \rightarrow \mathbb{P}^n$. We will compute
\[
    \mathrm{fp}\int_{\mathbb{P}^n}\omega = \langle \mu_0^{\|s\|}(\omega),1\rangle
\]
explicitly when $n = 2$ and $n = 3$. 

\medskip 

One observation that allows us to compute the following examples explicitly is the following formula.
\begin{conjecture}
    \label{conj:1}
    \begin{equation}
        \label{eq:conj_formula}
        \frac{i^n}{2^n}\frac{\d z_1 \wedge  \cdots \wedge \d \bar{z}_n}{|z_1 \cdots z_n|^2} = \sum\limits_{\ell=0}^n 
        \frac{\ell + 1}{(n-\ell)!} \omega_{\mathrm{FS}}^{\wedge \ell} \wedge \bigg( \sum\limits_{k=0}^n \frac{i}{2}\partial \log \|Z_k\|^2 \wedge \bar{\partial}\log\|Z_k\|^2\bigg)^{\!\!\wedge (n-\ell)}\!.
    \end{equation}
\end{conjecture}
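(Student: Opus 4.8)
The plan is to work on $U=\{Z_0\cdots Z_n\neq 0\}$ (the common domain of definition of the two sides of \cref{eq:conj_formula}, which is contained in the chart $\{Z_0\neq0\}$) and to translate the identity into linear algebra over the frame $e_j:=u_j-u_0=\d z_j/z_j$, $j=1,\dots,n$, of $(1,0)$-forms on $U$, where $u_k=\partial\log\|Z_k\|^2$. Write $t_k=\|Z_k\|^2$, $\bar u_k=\overline{u_k}=\bar{\partial}\log\|Z_k\|^2$, and $\Omega=\sum_{k=0}^n\tfrac i2 u_k\wedge\bar u_k$. From $\sum_k t_k=1$ we get $\sum_k t_k u_k=0$, hence $u_0=-\sum_{j=1}^n t_j e_j$ and $u_k=e_k-\sum_{j=1}^n t_j e_j$ in the $e$-frame; note also that the left-hand side of \cref{eq:conj_formula} is exactly $\mathrm{vol}_0:=\tfrac{i^n}{2^n}\bigwedge_{j=1}^n e_j\wedge\bar e_j$. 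Applying $\bar{\partial}$ to $\sum_k t_k u_k=0$, using $\bar{\partial}t_k=t_k\bar u_k$ and the Poincaré--Lelong formula $\bar{\partial}u_k=\bar{\partial}\partial\log\|Z_k\|^2=-2\pi i\,c_1(\mathcal O(1),\|\cdot\|)$ on $U$ (the same current for every $k$), one obtains the key identity $\omega_{\mathrm{FS}}=\sum_{k=0}^n t_k\,\tfrac i2 u_k\wedge\bar u_k$. Thus $\omega_{\mathrm{FS}}$ and $\Omega$ are, respectively, the $t$-weighted and the unweighted sum of the rank-one Hermitian $(1,1)$-forms $\tfrac i2 u_k\wedge\bar u_k$, and inserting the coefficients of the $u_k$ above one computes their symmetric matrices in the $e$-frame to be $M_{\mathrm{FS}}=D_t-tt^{\mathsf T}$ and $M_\Omega=I-t\mathbf 1^{\mathsf T}-\mathbf 1t^{\mathsf T}+(n+1)tt^{\mathsf T}$, where $t=(t_1,\dots,t_n)$, $\mathbf 1=(1,\dots,1)$ and $D_t=\mathrm{diag}(t_1,\dots,t_n)$.

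Next I would polarize. For $(1,1)$-forms $B,C$ on $U$ with matrices $M_B,M_C$ in the $e$-frame, applying $\tfrac1{n!}H^{\wedge n}=\det(M_H)\,\mathrm{vol}_0$ to $H=sB+C$ and comparing powers of the parameter $s$ gives
\[
\sum_{\ell=0}^n\frac{s^\ell}{\ell!\,(n-\ell)!}\,B^{\wedge\ell}\wedge C^{\wedge(n-\ell)}=\det\!\big(sM_B+M_C\big)\,\mathrm{vol}_0 .
\]
Taking $B=\omega_{\mathrm{FS}}$, $C=\Omega$ and using $\tfrac{\ell+1}{(n-\ell)!}\cdot\ell!\,(n-\ell)!=(\ell+1)!$, the right-hand side of \cref{eq:conj_formula} becomes $\big(\sum_{\ell=0}^n(\ell+1)!\,c_\ell\big)\,\mathrm{vol}_0$, where $\det(sM_{\mathrm{FS}}+M_\Omega)=\sum_\ell c_\ell s^\ell$. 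Since $(\ell+1)!=\int_0^\infty r^{\ell+1}e^{-r}\,\d r$, this equals $\big(\int_0^\infty r\,e^{-r}\det(rM_{\mathrm{FS}}+M_\Omega)\,\d r\big)\,\mathrm{vol}_0$, and because $\mathrm{vol}_0$ is the left-hand side of \cref{eq:conj_formula} the whole statement reduces to the scalar identity $\int_0^\infty r\,e^{-r}\det(rM_{\mathrm{FS}}+M_\Omega)\,\d r=1$.

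Finally I would evaluate the determinant. Here $rM_{\mathrm{FS}}+M_\Omega=(rD_t+I)+[\,t\ \mathbf 1\,]\,G\,[\,t\ \mathbf 1\,]^{\mathsf T}$ with the fixed matrix $G=\left(\begin{smallmatrix}n+1-r&-1\\-1&0\end{smallmatrix}\right)$, so the matrix-determinant lemma expresses $\det(rM_{\mathrm{FS}}+M_\Omega)$ as $\det(rD_t+I)=P(r)$, where $P(r)=\prod_{j=1}^n(rt_j+1)$, times a $2\times2$ determinant built from the sums $\sum_j\tfrac{1}{rt_j+1}$, $\sum_j\tfrac{t_j}{rt_j+1}$, $\sum_j\tfrac{t_j^2}{rt_j+1}$. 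Writing $q=\sum_{j=1}^n\tfrac{t_j}{rt_j+1}=P'(r)/P(r)$ and using $\sum_{j=1}^n t_j=1-t_0$ together with the elementary relations $\sum_j\tfrac1{rt_j+1}=n-rq$ and $\sum_j\tfrac{t_j^2}{rt_j+1}=\tfrac{1-t_0-q}{r}$, that $2\times2$ determinant collapses to $\tfrac1r\big[(1-q)(1+rt_0)-t_0\big]$, whence $r\,\det(rM_{\mathrm{FS}}+M_\Omega)=(P-P')(1+rt_0)-t_0P$. The desired $\int_0^\infty e^{-r}\big[(P-P')(1+rt_0)-t_0P\big]\,\d r=1$ then follows from two integrations by parts using $P(0)=1$ and the decay of $e^{-r}P(r)$: namely $\int_0^\infty e^{-r}(P-P')\,\d r=1$ and $\int_0^\infty e^{-r}r(P-P')\,\d r=\int_0^\infty e^{-r}P\,\d r$, so the two $t_0$-terms cancel. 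The main obstacle is the first paragraph: correctly establishing $\omega_{\mathrm{FS}}=\sum_k t_k\tfrac i2 u_k\wedge\bar u_k$ and reading off the two rank-two-perturbed symmetric matrices $M_{\mathrm{FS}},M_\Omega$ in the $e$-frame; after that the computation is essentially mechanical.
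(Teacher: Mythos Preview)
Your argument is correct and, notably, goes well beyond what the paper itself does: the paper does \emph{not} prove \cref{eq:conj_formula} at all but merely verifies it for $n\le 4$ with Macaulay2 and states it as a conjecture for general $n$. So there is no ``paper's own proof'' to compare against; you are supplying one.

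The approach is genuinely different from anything in the paper. Where the authors treat \cref{eq:conj_formula} as an experimental observation to be checked by computer algebra, you recast it as a pointwise linear-algebra identity in the frame $e_j=\d z_j/z_j$. The two key reductions---the identity $\omega_{\mathrm{FS}}=\sum_k t_k\,\tfrac i2 u_k\wedge\bar u_k$ obtained by applying $\bar\partial$ to $\sum_k t_k u_k=0$, and the polarization $\tfrac1{n!}H^{\wedge n}=\det(M_H)\,\mathrm{vol}_0$---are clean and correct, and they turn the conjecture into the scalar statement $\int_0^\infty r\,e^{-r}\det(rM_{\mathrm{FS}}+M_\Omega)\,\d r=1$. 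I checked your matrix-determinant-lemma computation: with $a=\tfrac{1-t_0-q}{r}$, $b=n-rq$, $q=P'/P$, the $2\times2$ determinant is $(1-q)^2+(n+1-r)a-ab=t_0(1-q)+\tfrac{1-t_0-q}{r}$, which is exactly your $\tfrac1r[(1-q)(1+rt_0)-t_0]$; the two integrations by parts then finish it off. The Gamma-integral trick $(\ell+1)!=\int_0^\infty r^{\ell+1}e^{-r}\,\d r$ is an elegant way to collapse the sum over $\ell$. What your method buys is a proof valid for all $n$ (resolving the conjecture), whereas the paper's computer check is inherently finite; conversely, the paper's verification requires no insight but gives no understanding of why the identity holds.
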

The formula \cref{eq:conj_formula} has been checked to hold for $n\leq 4$, using the algebra software system Macaulay2, and we conjecture that it holds in general. We note that \cref{eq:conj_formula} is an explicit instance of \cref{eq:omegadecomp}. This becomes more clear by the observation
\begin{equation}
    \label{eq:alternateformula}
    \bigg( \sum\limits_{k=0}^n \frac{i}{2}\partial \log \|Z_k\|^2 \wedge \bar{\partial}\log\|Z_k\|^2\bigg)^{\!\!\wedge (n-\ell)} \!\!\!\!\! = \sum\limits_{\mathclap{0\leq j_1 < \cdots < j_{n-\ell} \leq n}} (n-\ell)! \bigwedge\limits_{k=1}^{n-\ell} \frac{i}{2}\partial\log\|Z_{j_k}\|^2 \wedge \bar{\partial}\log\|Z_{j_k}\|^2.
\end{equation}
Thus, by \cref{eq:linearity} and \cref{eq:additivity},
\begin{align*}
    \mu_0^{\|s\|}(\omega) &= \mu_0^{\|s\|} \Bigg( \sum\limits_{\ell=0}^n \frac{\ell + 1}{(n-\ell)!} \omega_{\mathrm{FS}}^{\wedge \ell} \wedge \bigg( \sum\limits_{k=0}^n \frac{i}{2}\partial \log \|Z_k\|^2 \wedge \bar{\partial}\log\|Z_k\|^2\bigg)^{\!\!\wedge (n-\ell)} \Bigg) \\
    &= \sum\limits_{\ell=0}^n(\ell+1) \omega_{\mathrm{FS}}^{\wedge \ell} \wedge \!\!\!\!\!\!\!\!\sum\limits_{0\leq j_1 < \cdots < j_{n-\ell} \leq n} \!\!\!\!\!\!\!\mu_0^{\|s\|} 
    \bigg( \bigwedge\limits_{k=1}^{n-\ell} \frac{i}{2}\partial\log\|Z_{j_k}\|^2 \wedge \bar{\partial}\log\|Z_{j_k}\|^2 \bigg).
\end{align*}
By applying \Cref{prop:6} and \Cref{thm:1} we obtain
\begin{equation}
    \label{eq:masterformulaPn}
    \begin{aligned}
        \mu_0^{\|s\|}(\omega) &= \sum\limits_{\ell=0}^n (\ell+1) \omega_{\mathrm{FS}}^{\wedge \ell} \wedge \!\!\!\!\!\! \sum\limits_{0\leq j_1 < \cdots < j_{n-\ell} \leq n}\sum\limits_{\ell' = 0}^{n-\ell} \frac{1}{\ell'}\bigg( \log\frac{\|Z_0 \cdots Z_{n}\|^2}{\|Z_{j_1}\cdots Z_{j_{n-\ell}}\|^2}\bigg)^{\ell'} \times \\
        &\qquad\qquad \times \sum\limits_{\substack{\ell_1,\hdots,\ell_{n-\ell} \geq -1 \\ \ell_1 + \cdots + \ell_{n-\ell} = - \ell'}} \bigwedge_{k=1}^{n-\ell} \mu_{\ell_k}^{\|Z_{j_k}\|}\Big(\frac{i}{2}\partial\log\|Z_{j_k}\|^2 \wedge \bar{\partial}\log\|Z_{j_k}\|^2\Big),
    \end{aligned}
\end{equation}
which is an explicit instance of \cref{eq:masterformula}. Moreover, by \cref{eq:elementarymuformula}, 
\begin{align*}
    \mu_{\ell_k}^{\|Z_{j_k}\|}\Big(\frac{i}{2}\partial\log\|Z_{j_k}\|^2 \wedge \bar{\partial}\log\|Z_{j_k}\|^2 \Big) &= -\frac{1}{(\ell_k+2)!} \frac{i}{2}\bar{\partial}\partial \big( \log\|Z_{j_k}\|^2\big)^{\ell_k+2} \\
    &\qquad\quad + \frac{\pi}{(\ell_k+1)!}\big(\log\|Z_{j_k}\|^2\big)^{\ell_k+1} c_{1}(\mathcal{O}_1,\|\cdot\|),
\end{align*}
where $c_1(\mathcal{O}(1),\|\cdot\|) = \omega_{\mathrm{FS}}/\pi$.
\begin{example}[$\mathbb{P}^2$]
    For $n=2$,
    \[
        \omega = \frac{i^2}{2^2}\frac{\d z_1 \wedge \d \bar{z}_1 \wedge \d z_2 \wedge \d \bar{z}_2}{|z_1 z_2|^2},
    \]
    and, by \cref{eq:conj_formula},
    \begin{align*}
        \omega &= \sum\limits_{0\leq i < j \leq 2} \frac{i}{2}\partial\log\|Z_i\|^2 \wedge \bar{\partial}\log\|Z_i\|^2 \wedge \frac{i}{2}\partial\log\|Z_j\|^2 \wedge \bar{\partial}\log\|Z_j\|^2 \\
        &\qquad + 2 \sum_{i=0}^2 \frac{i}{2} \partial\log\|Z_i\|^2 \wedge \bar{\partial}\log\|Z_i\|^2\wedge \omega_{\text{FS}} + 3 \omega_{\text{FS}}\wedge \omega_{\text{FS}}.
    \end{align*}
    Let $\omega_j = \tfrac{i}{2}\partial\log\|Z_j\|^2 \wedge \bar{\partial}\log\|Z_j\|^2$. By \cref{eq:masterformulaPn},
    \begin{equation}
        \label{eq:p2formula}
        \begin{aligned}
            \mu_0^{\|s\|}(\omega) &= \sum\limits_{0\leq i < j \leq 2} \sum\limits_{\ell=0}^{2}\frac{1}{\ell!}\bigg(\log\frac{\|Z_0 Z_1 Z_2\|^2}{\|Z_i Z_j\|^2}\bigg)^{\ell}\sum\limits_{\substack{\ell_1,\ell_2 \geq -1\\\ell_1 + \ell_2 = - \ell}}\mu_{\ell_1}^{\|Z_i\|}(\omega_i) \wedge \mu_{\ell_2}^{\|Z_j\|} (\omega_j) \\
            &\qquad + 2 \sum\limits_{j=0}^2 \omega_{\mathrm{FS}} \wedge \sum\limits_{\ell=0}^1 \frac{1}{\ell!} \bigg( \log\frac{\|Z_0 Z_1 Z_2\|^2}{\|Z_j\|^2} \bigg)^{\ell} \mu_{-\ell}^{\|Z_j\|}(\omega_j) + 3 \omega_{\mathrm{FS}}\wedge \omega_{\mathrm{FS}}.
        \end{aligned}
    \end{equation}
    When computing $\langle\mu_0^{\|s\|}(\omega),1\rangle$ some simplifications can be made. First, the resulting integrals can be computed in any chart $\{Z_j\neq 0\} \simeq \C^2$. Furthermore, we have that
    \[
        \Big\langle \big(\log\|Z_0\|^2\big)^{\ell} \mu_{\ell_1}^{\|Z_1\|}(\omega_1) \wedge \mu_{\ell_2}^{\|Z_2\|} (\omega_2), 1\Big\rangle
    \]
    is independent under permutations of $(Z_0,Z_1,Z_2)$, and the same goes for the other terms on the right-hand side of \cref{eq:p2formula}. Thus, we obtain a simplified formula for the finite part
    \begin{align*}
        \langle\mu_0^{\|s\|}(\omega),1\rangle &= 3 \sum\limits_{\ell=0}^{2} \sum\limits_{\substack{\ell_1,\ell_2 \geq -1\\\ell_1 + \ell_2 = - \ell}} \frac{1}{\ell!} \Big\langle \big(\log\|Z_0\|^2\big)^{\ell} \mu_{\ell_1}^{\|Z_1\|}(\omega_1) \wedge \mu_{\ell_2}^{\|Z_2\|} (\omega_2), 1\Big\rangle\\
        &\qquad + 6 \sum\limits_{\ell=0}^1 \Big\langle \omega_{\mathrm{FS}}\wedge \big( \log\|Z_0 Z_1\|^2 \big)^{\ell} \mu_{-\ell}^{\|Z_2\|}(\omega_2), 1\Big\rangle + 3 \Big\langle \omega_{\mathrm{FS}}\wedge \omega_{\mathrm{FS}}, 1\Big\rangle \\
        &= 3 \Big\langle \mu_{0}^{\|Z_1\|}(\omega_1)\wedge \mu_{0}^{\|Z_2\|}(\omega_2),1 \Big\rangle + 6 \Big\langle \mu_{-1}^{\|Z_1\|}(\omega_1)\wedge \mu_{1}^{\|Z_2\|}(\omega_2),1\Big\rangle \\
        &\qquad+ 6\Big\langle \log\|Z_0\|^2  \mu_{0}^{\|Z_1\|}(\omega_1)\wedge \mu_{-1}^{\|Z_2\|}(\omega_2),1 \Big\rangle \\
        &\qquad + \frac{3}{2} \Big\langle \big( \log\|Z_0\|^2 \big)^2 \mu_{-1}^{\|Z_1\|}(\omega_1)\wedge \mu_{-1}^{\|Z_2\|}(\omega_2),1\Big\rangle \\
        &\qquad + 6 \Big\langle \omega_{\mathrm{FS}}\wedge \mu_0^{\|Z_2\|}(\omega_2),1\Big\rangle +6\Big\langle \omega_{\mathrm{FS}}\wedge \log\|Z_0 Z_1\|^2 \mu_{-1}^{\|Z_2\|}(\omega_2),1\Big\rangle \\
        &\qquad +  3\Big\langle \omega_{\mathrm{FS}}\wedge \omega_{\mathrm{FS}}, 1\Big\rangle.
    \end{align*}
    Note that after using \cref{eq:elementarymuformula} to expand the $\mu_\ell^{\|Z_j\|}(\omega_j)$, some resulting terms in the expression for $\mu_0^{\|s\|}(\omega)$ are exact and hence vanish acting on $1$. This reduces the number of integrals one ultimately needs to compute. Computing the individual contributions to $\langle\mu_0^{\|s\|}(\omega),1\rangle$ is straight-forward, albeit tedious, to do by hand. As an example, consider the following term,
    \begin{align*}
        \Big\langle \log\|Z_0\|^2 \mu_0^{\|Z_1\|}(\omega_1) \wedge \mu_{-1}^{\|Z_2\|}(\omega_2),1\Big\rangle &= -\frac{\pi i}{4}\int_{\mathbb{P}^2} \log\|Z_0\|^2 \bar{\partial}\partial\big(\log\|Z_1\|^2\big)^2 \wedge [Z_2 = 0] \\
        &\qquad + \pi \int_{\mathbb{P}^2}\log\|Z_0\|^2\log\|Z_1\|^2 \omega_{\mathrm{FS}}\wedge [Z_2 = 0] \\
        &= \frac{\pi i}{4}\int_{\mathbb{P}^2} \bar{\partial}\log\|Z_0\|^2\wedge \partial\big(\log\|Z_1\|^2\big)^2 \wedge [Z_2 = 0] \\
        &\qquad + \pi \int_{\mathbb{P}^2}\log\|Z_0\|^2\log\|Z_1\|^2 \omega_{\mathrm{FS}}\wedge [Z_2 = 0] \\
        &= \frac{\pi i}{4}\int_{\mathbb{P}^1} \bar{\partial}\log\|Z_0\|^2\wedge \partial\big(\log\|Z_1\|^2\big)^2 \\
        &\qquad + \pi \int_{\mathbb{P}^1}\log\|Z_0\|^2\log\|Z_1\|^2 \omega_{\mathrm{FS}}.
    \end{align*}
    With local coordinate $z = Z_1/Z_0$ in the chart $\{Z_0\neq 0\}\cap \{Z_2=0\} \subset \{Z_2 = 0\} \simeq \mathbb{P}^1$, we thus have that
    \begin{align*}
        \Big\langle \log\|Z_0\|^2 \mu_0^{\|Z_1\|}(\omega_1) \wedge \mu_{-1}^{\|Z_2\|}(\omega_2),1\Big\rangle &=\frac{\pi i}{4}\int_{\C} \bar{\partial}\log\frac{1}{1+|z|^2}\wedge \partial\bigg(\log\frac{|z|^2}{1+|z|^2}\bigg)^2\\
        &\quad + \frac{\pi i}{2} \int_{\mathbb{C}}\log\frac{1}{1+|z|^2}\log\frac{|z|^2}{1+|z|^2} 
        \frac{\d z\wedge \d \bar{z}}{{(1+|z|^2)}^2} \\
        &= \frac{\pi i}{2}\int_{\C} \log\frac{|z|^2}{1+|z|^2} \frac{\d z \wedge \d \bar{z}}{{(1+|z|^2)}^2}\\
        &\quad + \frac{\pi i}{2} \int_{\mathbb{C}}\log\frac{1}{1+|z|^2}\log\frac{|z|^2}{1+|z|^2} 
        \frac{\d z\wedge \d \bar{z}}{{(1+|z|^2)}^2}.
    \end{align*}
    Now, changing to polar coordinates $(r,\theta)$, we obtain
    \begin{align*}
        \Big\langle \log\|Z_0\|^2 \mu_0^{\|Z_1\|}(\omega_1) \wedge \mu_{-1}^{\|Z_2\|}(\omega_2),1\Big\rangle &= 2\pi^2 \int_{0}^\infty  \log\frac{r^2}{1+r^2} \frac{r \,\d r}{{(1+r^2)}^2}\\
        &\qquad +2\pi^2 \int_{0}^\infty\log\frac{1}{1+r^2}\log\frac{r^2}{1+r^2} 
        \frac{r \,\d r}{{(1+r^2)}^2}.
    \end{align*}
    Finally, we change to a coordinate on the unit interval, via $x = r^2/(1+r^2)$, with $\d x = 2 r \,\d r/(1+r^2)^2$, and obtain
    \begin{align*}
        \Big\langle \log\|Z_0\|^2 \mu_0^{\|Z_1\|}(\omega_1) \wedge \mu_{-1}^{\|Z_2\|}(\omega_2),1\Big\rangle &=  \pi^2 \int_{0}^1  \log x \big( 1 + \log(1-x)\big)\,\d x \\
        &= \pi^2 \big(1 -\zeta(2) \big).
    \end{align*}

    \medskip
    
    After adding up each contribution, we end up with the following finite part,
    \begin{equation}
        \label{eq:p2}
        \mathrm{fp} \,\big(\tfrac{i}{2}\big)^2 \int_{\mathbb{P}^2} \frac{\d z_1 \wedge \d \bar{z}_1 \wedge \d z_2 \wedge \d \bar{z}_2}{|z_1 z_2|^2} = - 9\pi^2 \zeta(2).
    \end{equation}
\end{example}
\begin{example}[$\mathbb{P}^3$]
    The $n=3$ case is worked out analogously to $n=2$ but there are many more integrals to compute. We ultimately obtain the following finite part,
    \begin{equation}
        \label{eq:p3}
        \mathrm{fp}\,\big(\tfrac{i}{2}\big)^3\int_{\mathbb{P}^3} \frac{\d z_1 \wedge \d \bar{z}_1 \wedge \d z_2 \wedge \d \bar{z}_2  \wedge \d z_3 \wedge \d \bar{z}_3}{|z_1 z_2 z_3|^2} = 80 \pi^3 \zeta(3).
    \end{equation}
\end{example}

\subsection{Alternative approach}

There turns out to be another way to compute the finite part $\mathrm{fp}\int_{\mathbb{P}^n} \omega$ in this particular example, for a general $n$. The following is based on a similar computation found in the proof of \cite[Proposition 6.3]{Ber1}. The key is to consider the integral
\[
    I(\lambda) = \int_{\mathbb{C}^{n+1}} |Z_0 \cdots Z_{n}|^{2(\lambda-1)} e^{-\sum_{j=0}^n |Z_j|^2} \d m,
\]
for $\mathfrak{Re}\,\lambda \gg 0$, where $\d m$ is the standard Lebesgue measure on $\mathbb{C}^{n+1}$, that is $\d m = \frac{i^{n+1}}{2^{n+1}} \d Z_0 \wedge \d \bar{Z}_0 \wedge \cdots \wedge \d Z_n \wedge \d \bar{Z}_n$. Changing to spherical coordinates, and using the homogeneity of $Z_0\cdots Z_n$, we find that
\[
    I(\lambda) = \int_{\mathbb{S}^{2n+1}} \bigg(\frac{|Z_0 \cdots Z_{n}|^2}{{\big(\sum_{j=0}^n|Z_j|^2\big)}^{n+1}}\bigg)^{\lambda-1}\d \sigma \int_0^{\infty} r^{2(\lambda-1)(n+1)} e^{- r^2} r^{2(n+1)-1} \d r,
\]
where we note that $|Z_0 \cdots Z_n|^2/\big(\sum_{j=0}^n|Z_j|^2\big)^{n+1}$ is independent of the radius $r$, and thus defines a function on the unit $(2n+1)$-sphere $\mathbb{S}^{2n+1}$. One then notes that for any $k$-homogeneous polynomial $p=p(Z_0, \hdots, Z_n)$ on $\mathbb{P}^n$, in particular $p=Z_0\cdots Z_n$, we have that
\begin{equation}
    \int_{\mathbb{S}^{2n+1}}|p(Z_0/|Z|,\hdots,Z_n/|Z|)|^{2\lambda} \d \sigma = \frac{2 \pi}{n!} \int_{\mathbb{P}^n}\|p\|^{2\lambda}\omega_{\mathrm{FS}}^n,
\end{equation}
where $\d \sigma$ is the uniform volume form on the $(2n+1)$-sphere and where $\|\cdot\|$ is the Fubini--Study metric. Thus, we find that
\begin{align*}
    I(\lambda) &= \frac{2\pi}{n!} \int_{\mathbb{P}^n}\|s\|^{2(\lambda-1)} \omega_{\mathrm{FS}}^n \int_0^{\infty} r^{2(\lambda-1)(n+1)} e^{- r^2} r^{2(n+1)-1} \d r \\
    &= 2\pi \int_{\mathbb{P}^n}\|s\|^{2\lambda} \omega \int_0^{\infty} r^{2(\lambda-1)(n+1)} e^{- r^2} r^{2(n+1)-1} \d r,
\end{align*}
in view of \cref{eq:omegaformula}, that is,
\[
    \int_{\mathbb{P}^n}\|s\|^{2\lambda} \omega = \frac{1}{2\pi} I(\lambda) \bigg(\int_0^{\infty} r^{2(\lambda-1)(n+1)} e^{- r^2} r^{2(n+1)-1} \d r\bigg)^{-1}.
\]
The last two observations are that
\begin{align*}
    I(\lambda) &= \prod_{j=0}^n \frac{i}{2}\int_{\mathbb{C}}|Z_j|^{2(\lambda-1)} e^{-|Z_j|^2} \d Z_j \wedge \d \bar{Z}_j \\
    &= (2\pi)^{n+1} \bigg( \int_0^\infty r^{2(\lambda-1)+1} e^{-r^2} \d r\bigg)^{n+1} \\
    &= \pi^{n+1} \Gamma(\lambda)^{n+1},
\end{align*}
and
\begin{align*}
    \int_0^{\infty} r^{2(\lambda-1)(n+1)} e^{- r^2} r^{2(n+1)-1} \d r &= \frac{1}{2}\Gamma((n+1)\lambda),
\end{align*}
where $\Gamma$ denotes the Euler gamma function. Thus
\[
    \int_{\mathbb{P}^n} \|s\|^{2\lambda} \omega = \pi^n \frac{\Gamma(\lambda)^{n+1}}{\Gamma((n+1)\lambda)},
\]
where the right hand side defines a meromorphic function on $\C$, with a pole of order $n$ at $0$. Then, in view of \cref{eq:residue},
\begin{equation}
    \label{eq:adhocformula}
    \mathrm{fp}\int_{\mathbb{P}^n}\omega = \frac{1}{n!}\frac{\d^n}{\d \lambda^{n}} \bigg( \lambda^n  \int_{\mathbb{P}^n} \|s\|^{2\lambda} \omega \bigg)\bigg|_{\lambda=0} = \frac{\pi^n}{n!} \frac{\d^n}{\d \lambda^{n}} \bigg( \frac{\lambda^n \Gamma(\lambda)^{n+1}}{\Gamma((n+1)\lambda)} \bigg)\bigg|_{\lambda = 0}.
\end{equation}
Indeed, this formula produces the same answer for $n=2,3$. Moreover, using Wolfram Mathematica we are able to compute the finite part also in the case $n=4$ and $n=5$ using this formula. We obtain the following:
\begin{align}
    \label{eq:p4}
    \mathrm{fp}\int_{\mathbb{P}^4}\omega &= -150\pi^4 \zeta(4), \\
    \label{eq:p5}
    \mathrm{fp}\int_{\mathbb{P}^5}\omega &= 252\pi^5 \big( 37\zeta(5) - 25\zeta(2)\zeta(3)\big).
\end{align}


\begin{thebibliography}{99}

\bibitem{Atiyah} \textsc{Atiyah, M.} Resolution of singularities and division of distributions. \textit{Comm. Pure Appl. Math.} \textbf{23} (1970), 145--150.

\bibitem{Ber1} \textsc{Berman, R. J.} Kähler--Einstein metrics and Archimedean zeta functions. \textit{Proceedings of the 8\textsuperscript{th} European Congress of Mathematics.} (2023), 199--251.

\bibitem{BG} \textsc{Bernšteĭn, I. N.; Gel'fand, S. I.} Meromorphy of the function $P^\lambda$. \textit{Funkcional. Anal. i Priložen.} \textbf{3} (1969), no. 1, 84--85.

\bibitem{CH1} \textsc{Coleff, N. R. \& Herrera, M. E.} Les courants résiduels associés à une forme méromorphe. \textit{Lect. Notes in Math.} \textbf{633}, Berlin-Heidelberg-New York (1978).

\bibitem{I1} \textsc{Igusa, J.} An introduction to the theory of local zeta functions. \textit{AMS/IP Studies in Advanced Mathematics}, vol. \textbf{14}, American Mathematical Society, \textit{International Press, Cambridge, MA}, 2000.

\bibitem{FK1} \textsc{Kazhdan, D. \& Felder, G.} Divergent integrals, residues of Dolbeault forms, and asymptotic Riemann mappings. \textit{Int. Math. Res. Not.} IMRN \textbf{19} (2017), 5897--5918.

\bibitem{FK2} \textsc{Kazhdan, D. \& Felder, G.} Regularization of divergent integrals. \textit{Sel. Math.} (N.S.) \textbf{24} (2018), no. 1, 157--186.

\bibitem{KZ1} \textsc{Kontsevich, M. \& Zagier, D.} Periods. \textit{Mathematics Unlimited --- 2001 and Beyond}. Springer Berlin Heidelberg, (2001), 771--808.

\bibitem{dR1} \textsc{de Rham, G.} Differentiable manifolds: forms, currents, harmonic forms. \textit{Springer-Verlag Berlin Heidelberg}, 1984.

\bibitem{SK1} \textsc{Samuelsson, H.} Analytic continuation of residue currents. \textit{Ark. Mat.} \textbf{47} (2009), 127--141.

\bibitem{S1} \textsc{Svensson, L.} On finite parts of divergent complex geometric integrals and their dependence on a choice of Hermitian metric. \textit{J. Geom. Anal.} \textbf{34}, 325 (2024).

\bibitem{W1} \textsc{Witten, E.} Perturbative superstring theory revisited. \texttt{arXiv:1209.5461} [hep-th].

\bibitem{Z1} \textsc{Zagier, D.} Values of zeta functions and their applications. \textit{First European Congress of Mathematics Paris, July 6--10, 1992: Vol. II: Invited Lectures (Part 2)} \textit{Birkhäuser Basel} (1994), 497--512.



















\end{thebibliography}
\end{document}